 \newtheorem{thm}{Theorem}[section]
 \newtheorem{cor}[thm]{Corollary}
 \newtheorem{prop}[thm]{Proposition}
 \newtheorem{defn}[thm]{Definition}
 \newtheorem{rem}[thm]{Remark}
 \def\k{\mathbbm{k}}
 \newcommand{\Hom}{\mathrm{Hom}}
\title{DG polynomial algebras and their homological properties}
\author{X.-F. Mao}
\address{Department of Mathematics, Shanghai University, Shanghai 200444, China}
\email{xuefengmao@shu.edu.cn}
\author{X.-D. Gao}
\address{Department of Mathematics, College of Mathematics and Statistics, Kashgar University, Kashgar, Xinjiang, 844006, China}
\email{986242791@qq.com}
\author{Y.-N. Yang}
\address{Department of Mathematics, College of Mathematics and Statistics, Kashgar University, Kashgar, Xinjiang, 844006, China}
\email{1252170187@qq.com}
\author{J.-H. chen}
\address{Department of Mathematics, College of Mathematics and Statistics, Kashgar University, Kashgar, Xinjiang, 844006, China}
\email{sharemaster@sina.com}
\date{}
\subjclass[2010]{Primary 16E45, 16E65, 16W20,16W50}
\keywords{DG polynomial algebra, cohomology graded algebra,  homologically smooth, Gorenstein, Calabi-Yau}
\begin{document}

\maketitle \def\abstactname{abstact}
\begin{abstract}
In this paper, we introduce and study differential graded (DG for short) polynomial algebras. In brief, a DG polynomial algebra $\mathcal{A}$ is a connected cochain DG algebra
such that its underlying graded algebra $\mathcal{A}^{\#}$ is a polynomial algebra $\k[x_1,x_2,\cdots, x_n]$ with $|x_i|=1$, for any $i\in \{1,2,\cdots, n\}$.

We describe all possible differential structures on DG polynomial algebras;
   compute their DG automorphism groups; study their isomorphism problems; and show that they are all homologically smooth and Gorestein DG algebras. Furthermore, it is proved that the DG polynomial algebra $\mathcal{A}$ is a Calabi-Yau DG algebra when its differential $\partial_{\mathcal{A}}\neq 0$ and the trivial  DG polynomial algebra $(\mathcal{A}, 0)$ is Calabi-Yau if and only if $n$ is an odd integer.
\end{abstract}

\maketitle

\section{introduction}

In the literature, there are many papers on the research of homological properties of connected cochain DG algebras. For example, Gorenstein properties of DG algebras are studied in \cite{FHT1,FIJ, FJ1,FJ2,FM,Gam,Jor,Mao,MW2,Sch};
  He-Wu (cf. \cite{HW}) introduce and study Koszul  connected cochain DG algebras; and recently,  the first  author and J.-W. He (cf. \cite{HM}) give a criterion for a connected cochain DG algebra to be $0$-Calabi-Yau, and prove that a locally finite connected cochain DG algebra is $0$-Calabi-Yau if and only if it is defined by a potential. In spite of these, it is still difficult to judge whether a given DG algebra has some good homological properties such as formality, homologically smoothness, Gorensteinness  and
   Calabi-Yau property.

   Generally, the homological properties of a DG algebra are determined by the joint effects of its underlying graded algebra structure and differential structure. However, it is feasible,  at least in some special
cases, to judge some homological properties of a DG algebra from its underlying graded algebra. For example, it is shown in \cite{Mao} that
 a connected cochain DG algebra $\mathcal{B}$ is Gorenstein if its underlying graded algebra $\mathcal{B}^{\#}$ is an Artin-Schelter regular algebra of dimension $2$. Especially, if $\mathcal{B}^{\#}$
is generated by degree $1$ elements $x,y$ and subject to the
relation $xy+yx=0$, then $\mathcal{B}$ is a Koszul Calabi-Yau DG algebra (see \cite{MH}).  Recently, DG down-up algebras are introduced and studied in \cite{MHLX}. It is proved that all Non-trivial Noetherian DG down-up algebras are Calabi-Yau DG algebras.

This paper deals with DG polynomial algebras, which are connected cochain DG algebras
whose underlying graded algebras are polynomial algebras generated by degree $1$ elements.
We describe all possible differential structures on such DG polynomial algebras by the following theorem (see Theorem \ref{diffstructure}).
\\
\begin{bfseries}
Theorem \ A.
\end{bfseries}
Let $(\mathcal{A},\partial_{\mathcal{A}})$ be a connected cochain DG algebra such that $\mathcal{A}^{\#}$ is a polynomial graded algebra $\k[x_1,x_2,\cdots,x_n]$ with $|x_i|=1$, for any $i\in \{1,2,\cdots,n\}$.
Then there exist some $t_1,t_2,\cdots,t_n\in \k$ such that $\partial_{\mathcal{A}}$ is defined  by
$$\partial_{\mathcal{A}}(x_i)=\sum\limits_{j=1}^nt_jx_ix_j=\sum\limits_{j=1}^{i-1}t_jx_jx_i+t_ix_i^2+\sum\limits_{j=i+1}^nt_jx_ix_j,$$ for any $i\in \{1,2,\cdots,n\}$.
 Conversely, for any point $(t_1,t_2,\cdots, t_n)$ in the affine $n$-space $ \Bbb{A}_{\k}^n$, we can define a differential $\partial$ on $\k[x_1,x_2,\cdots,x_n]$ by
$$\partial(x_i)=\sum\limits_{j=1}^{i-1}t_jx_jx_i+t_ix_i^2+\sum\limits_{j=i+1}^nt_jx_ix_j,   \forall i\in \{1,2,\cdots,n\},$$
such that $(\k[x_1,x_2,\cdots,x_n],\partial)$ is a DG polynomial algebra.

By Theorem A, we can define $\mathcal{A}(t_1,t_2,\cdots, t_n)$ as a cochain DG algebra such that
$\mathcal{A}(t_1,t_2,\cdots, t_n)^{\#}=\k[x_1,x_2,\cdots,x_n]$ and its differential $\partial_{\mathcal{A}}$ is defined by $$\partial_{\mathcal{A}}(x_i)=\sum\limits_{j=1}^{i-1}t_jx_jx_i+t_ix_i^2+\sum\limits_{j=i+1}^nt_jx_ix_j, \forall i\in \{1,2,\cdots,n\}.$$
Then the set
$$\Omega(x_1,x_2,\cdots,x_n)=\{\mathcal{A}(t_1,t_2,\cdots,t_n)|t_i\in
\k, i=1,2,\cdots, n\}\cong \Bbb{A}_{\k}^n.$$

To consider the homological properties of $\mathcal{A}(t_1,t_2,\cdots, t_n)$, it is necessary to study the isomorphism problem of DG polynomial algebras.  We have the following theorem (see Theorem \ref{isom}).
\\
\begin{bfseries}
Theorem \ B.
\end{bfseries}
Let $\mathcal{A}(t_1,t_2,\cdots,t_n)$ and
$\mathcal{A}(t_1',t_2',\cdots,t_n')$ be two points in the
space $\Omega(x_1,x_2,\cdots,x_n)$. Then the DG algebras
$$\mathcal{A}(t_1,t_2,\cdots,t_n)\cong
\mathcal{A}(t_1',t_2',\cdots,t_n')$$ if and only if there
is a matrix $M\in \mathrm{GL}_n(\k)$ such that
$$(t_1',t_2',\cdots,t_n')=(t_1,t_2,\cdots,t_n)M.$$

By Theorem B,  we have only two isomorphism classes in $\Omega(x_1,x_2,\cdots,x_n)$ represented by $\mathcal{A}(0,0,\cdots, 0)$ and $\mathcal{A}(1,0,\cdots,0)$ (see Corollary \ref{isomor}), and we obtain the automorphism group (see Corollary \ref{auto})
$$\mathrm{Aut}_{dg}(\mathcal{A}(t_1,t_2,\cdots,t_n))\cong \{M\in \mathrm{GL}_n(\k)| (t_1,t_2,\cdots,t_n)=(t_1,t_2,\cdots, t_n)M   \}.$$
Generally, it is difficult to determine whether  a given DG algebra has some nice homological properties.
 Comparatively speaking, it is much easier to compute its cohomology graded algebra. For this, we compute the cohomology graded algebra of $\mathcal{A}(1,0,\cdots, 0)$, which is (see Proposition \ref{cohomology})
$$\k[\lceil x_2^2\rceil,\lceil x_2x_3\rceil, \cdots,\lceil x_2x_n\rceil, \lceil x_3^2\rceil, \cdots, \lceil x_3x_n\rceil, \cdots,  \lceil x_{n-1}^2\rceil, \lceil x_{n-1}x_n\rceil, \lceil x_n^2\rceil ].$$
It implies that any DG polynomial algebra $\mathcal{A}(t_1,t_2,\cdots,t_n)$ is a formal, homologically smooth and Gorenstein DG algebra (see Theorem \ref{asreg}).

It is natural for one to ask whether DG polynomial algebras are Calabi-Yau. In \cite{MH}, it is proved that a connected cochain DG algebra $\mathcal{A}$ is a Kozul Calabi-Yau DG algebra if $H(\mathcal{A})$  belongs to one of the following cases:
\begin{align*}
& (a) H(\mathcal{A})\cong \k;  \quad \quad (b) H(\mathcal{A})= \k[\lceil z\rceil], z\in \mathrm{ker}(\partial_{\mathcal{A}}^1); \\
& (c) H(\mathcal{A})= \frac{\k\langle \lceil z_1\rceil, \lceil z_2\rceil\rangle}{(\lceil z_1\rceil\lceil z_2\rceil +\lceil z_2\rceil \lceil z_1\rceil)}, z_1,z_2\in \mathrm{ker}(\partial_{\mathcal{A}}^1).
\end{align*}
Recently, it is proved in \cite[Proposition 6.5]{MHLX} that a connected cochain DG algebra $\mathcal{A}$ is Calabi-Yau if $H(\mathcal{A})=\k[\lceil z_1\rceil, \lceil z_2\rceil]$ where $z_1\in \mathrm{ker}(\partial_{\mathcal{A}}^1)$ and $z_2\in \mathrm{ker}(\partial_{\mathcal{A}}^2)$. And a connected cochain DG algebra $\mathcal{A}$ is not Calabi-Yau if $H(\mathcal{A})=\k[\lceil z_1\rceil, \lceil z_2\rceil]$ where $z_1\in \mathrm{ker}(\partial_{\mathcal{A}}^1)$ and $z_2\in \mathrm{ker}(\partial_{\mathcal{A}}^1)$ (see \cite[Theorem B]{MH}). These motivate us to consider more general cases.

We have the following two theorems (see Theorem \ref{cyprop} and Theorem \ref{degreeone}).
 \\
\begin{bfseries}
Theorem \ C.
\end{bfseries}
 Let $\mathcal{A}$ be a connected cochain DG algebra such that $$H(\mathcal{A})=\k[\lceil y_1\rceil, \cdots, \lceil y_m\rceil ],$$ for some central,  cocycle and degree $2$ elements $y_1,\cdots, y_m$  in $\mathcal{A}$. Then $\mathcal{A}$ is a $-m$-Calabi-Yau DG algebra.
 \\
 \begin{bfseries}
Theorem \ D.
\end{bfseries}
Let $\mathcal{A}$ be a connected cochain DG algebra such that $$H(\mathcal{A})=\k[\lceil y_1\rceil, \cdots, \lceil y_m\rceil ],$$ for some central, cocycle and degree $1$ elements $y_1,\cdots, y_m$  in $\mathcal{A}$. Then $\mathcal{A}$ is a Koszul, homologically smooth and Gorenstein DG algebra. Moreover, $\mathcal{A}$ is Calabi-Yau if and only if $n$ is an odd integer.

With the help of Theorem C and Theorem D, we get the following conclusion (see Corollary \ref{non-zero} and Corollary \ref{zerodiff}): $\mathcal{A}(t_1,t_2,\cdots, t_n)$ is a Calabi-Yau DG algebra if $(t_1,t_2,\cdots, t_n)\neq (0,0,\cdots, 0)$ and $\mathcal{A}(0,0,\cdots, 0)$ is a Calabi-Yau DG algebra if and only if $n$ is an odd integer.

----------------------------------------------------------------------
\section{Notations and conventions}
 We assume that the reader is familiar with basic definitions concerning DG homological
algebra.  If this is not the case, we refer to \cite{AFH, FHT2, MW1,MW2,FJ2} for more details on them. We begin by fixing some notations and terminology.
There are some overlaps here in \cite{MHLX}.

Throughout this paper, $\k$ is an algebraically closed field of characteristic $0$.
For any $\k$-vector space $V$, we write $V^*=\Hom_{\k}(V,\k)$. Let $\{e_i|i\in I\}$ be a basis of a finite dimensional $\k$-vector space $V$.  We denote the dual basis of $V$ by $\{e_i^*|i\in I\}$, i.e., $\{e_i^*|i\in I\}$ is a basis of $V^*$ such that $e_i^*(e_j)=\delta_{i,j}$. For any graded vector space $W$ and $j\in\Bbb{Z}$,  the $j$-th suspension $\Sigma^j W$ of $W$ is a graded vector space defined by $(\Sigma^j W)^i=W^{i+j}$.

A cochain DG algebra is
a graded
$\k$-algebra $\mathcal{A}$ together with a differential $\partial_{\mathcal{A}}: \mathcal{A}\to \mathcal{A}$  of
degree $1$ such that
\begin{align*}
\partial_{\mathcal{A}}(ab) = (\partial_{\mathcal{A}} a)b + (-1)^{|a|}a(\partial_{\mathcal{A}} b)
\end{align*}
for all graded elements $a, b\in \mathcal{A}$.
For any DG algebra $\mathcal{A}$,  we denote $\mathcal{A}\!^{op}$ as its opposite DG
algebra, whose multiplication is defined as
 $a \cdot b = (-1)^{|a|\cdot|b|}ba$ for all
graded elements $a$ and $b$ in $\mathcal{A}$.  A cochain DG algebra $\mathcal{A}$ is called
non-trivial if $\partial_{\mathcal{A}}\neq 0$, and $\mathcal{A}$ is said to be connected if its underlying graded algebra $\mathcal{A}^{\#}$ is a connected  graded algebra. Given a cochain DG algebra $\mathcal{A}$, we denote by $\mathcal{A}^i$ its $i$-th homogeneous component.  The differential $\partial_{\mathcal{A}}$ is a sequence of linear maps $\partial_{\mathcal{A}}^i: \mathcal{A}^i\to \mathcal{A}^{i+1}$ such that $\partial_{\mathcal{A}}^{i+1}\circ \partial_{\mathcal{A}}^i=0$, for all $i\in \Bbb{Z}$. The cohomology graded algebra of $\mathcal{A}$ is the graded algebra $$H(\mathcal{A})=\bigoplus_{i\in \Bbb{Z}}\frac{\mathrm{ker}(\partial_{\mathcal{A}}^i)}{\mathrm{im}(\partial_{\mathcal{A}}^{i-1})}.$$
 For any cocycle element $z\in \mathrm{ker}(\partial_{\mathcal{A}}^i)$, we write $\lceil z \rceil$ as the cohomology class in $H(\mathcal{A})$ represented by $z$. One sees that $H(\mathcal{A})$ is a connected graded algebra if $\mathcal{A}$ is a connected cochain DG algebra.
  For any connected cochain DG algebra $\mathcal{A}$, we denote by $\frak{m}_{\mathcal{A}}$ its maximal DG
ideal $$ \cdots\to 0\to \mathcal{A}^1\stackrel{\partial^1_{\mathcal{A}}}{\to}
\mathcal{A}^2\stackrel{\partial^2_{\mathcal{A}}}{\to} \cdots \stackrel{\partial^{n-1}_{\mathcal{A}}}{\to}
\mathcal{A}^n\stackrel{\partial^n_{\mathcal{A}}}{\to}
 \cdots .$$
 Clearly,  $\k$ has a structure of DG $\mathcal{A}$-module via the augmentation map $$\varepsilon: \mathcal{A}\to \mathcal{A}/\frak{m}_{\mathcal{A}}=\k.$$
It is easy to check that the enveloping DG algebra $\mathcal{A}^e = \mathcal{A}\otimes \mathcal{A}\!^{op}$ of $\mathcal{A}$
is also a connected cochain DG algebra with $H(\mathcal{A}^e)\cong H(\mathcal{A})^e$, and $$\frak{m}_{\mathcal{A}^e}=\frak{m}_{\mathcal{A}}\otimes \mathcal{A}^{op} + \mathcal{A}\otimes
\frak{m}_{\mathcal{A}^{op}}.$$

A morphism $f:\mathcal{A}\to \mathcal{A}'$ of DG algebras is a chain map of complexes which respects multiplication and unit; $f$ is said to be a DG algebra isomorphism (resp. quasi-isomorphism) if $f$ (resp. $H(f)$) is an isomorphism.  A DG algebra isomorphism $f$ is called a DG automorphism when $\mathcal{A}'=\mathcal{A}$. The set of all DG algebra automorphisms of $\mathcal{A}$ is a group, denoted by $\mathrm{Aut}_{dg}(\mathcal{A})$.

Let $\mathcal{A}$ be a connected cochain DG algebra. The derived category of left DG modules over $\mathcal{A}$ (DG $\mathcal{A}$-modules for short) is denoted by $\mathrm{D}(\mathcal{A})$.  A DG $\mathcal{A}$-module  $M$ is compact if the functor $\Hom_{\mathrm{D}(A)}(M,-)$ preserves
all coproducts in $\mathrm{D}(\mathcal{A})$.
 By \cite[Proposition 3.3]{MW1},
a DG $\mathcal{A}$-module  is compact if and only if it admits a minimal semi-free resolution with a finite semi-basis. The full subcategory of $\mathrm{D}(\mathcal{A})$ consisting of compact DG $\mathcal{A}$-modules is denoted by $\mathrm{D}^c(\mathcal{A})$.

In the rest of this section, we review some important homological properties for DG algebras.
 We have the following definitions.
\begin{defn}\label{basicdef}
{\rm Let $\mathcal{A}$ be a connected cochain DG algebra.
\begin{enumerate}
\item  If $\dim_{\k}H(R\Hom_{\mathcal{A}}(\k,\mathcal{A}))=1$, then $\mathcal{A}$ is called Gorenstein (cf. \cite{FHT1});
\item  If $\mathcal{A}$ can be connected with the trivial DG algebra $(H(\mathcal{A}),0)$ by a zig-zag $\leftarrow \rightarrow\leftarrow \cdots\rightarrow $ of quasi-isomorphisms, then $\mathcal{A}$ is called formal (cf.\cite{Kal,Lunt});
\item  If ${}_{\mathcal{A}}\k$, or equivalently ${}_{\mathcal{A}^e}\mathcal{A}$, has a minimal semi-free resolution with a semi-basis concentrated in degree $0$, then $\mathcal{A}$ is called Koszul (cf. \cite{HW});
\item If ${}_{\mathcal{A}}\k$, or equivalently the DG $\mathcal{A}^e$-module $\mathcal{A}$ is compact, then $\mathcal{A}$ is called homologically smooth (cf. \cite[Corollary 2.7]{MW3});
\item If $\mathcal{A}$ is homologically smooth and $$R\Hom_{\mathcal{A}^e}(\mathcal{A}, \mathcal{A}^e)\cong
\Sigma^{-n}\mathcal{A}$$ in  the derived category $\mathrm{D}((\mathcal{A}^e)^{op})$ of right DG $\mathcal{A}^e$-modules, then $\mathcal{A}$ is called an $n$-Calabi-Yau DG algebra  (cf. \cite{Gin,VdB}).
\end{enumerate}}
 \end{defn}
 The motivation of this paper is to study whether DG polynomial algebras have these homological properties mentioned in Definition \ref{basicdef}.

\section{differential structures on polynomial algebra}
In this section, we study the differential structures of DG polynomial algebras. We have the following theorem.
\begin{thm}\label{diffstructure}
Let $(\mathcal{A},\partial_{\mathcal{A}})$ be a connected cochain DG algebra such that $\mathcal{A}^{\#}$ is a polynomial graded algebra $\k[x_1,x_2,\cdots,x_n]$ with $|x_i|=1$, for any $i\in \{1,2,\cdots,n\}$.
Then there exist some $t_1,t_2,\cdots,t_n\in \k$ such that $\partial_{\mathcal{A}}$ is defined  by
$$\partial_{\mathcal{A}}(x_i)=\sum\limits_{j=1}^nt_jx_ix_j=\sum\limits_{j=1}^{i-1}t_jx_jx_i+t_ix_i^2+\sum\limits_{j=i+1}^nt_jx_ix_j, \forall i\in \{1,2,\cdots,n\}. $$  Conversely, for any point $(t_1,t_2,\cdots, t_n)\in \Bbb{A}_{\k}^n$, we can define a differential $\partial$ on $\k[x_1,x_2,\cdots,x_n]$ by
$$\partial(x_i)=\sum\limits_{j=1}^{i-1}t_jx_jx_i+t_ix_i^2+\sum\limits_{j=i+1}^nt_jx_ix_j,   \forall i\in \{1,2,\cdots,n\},$$
such that $(\k[x_1,x_2,\cdots,x_n],\partial)$ is a DG polynomial algebra.
\end{thm}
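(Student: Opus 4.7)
The plan is to exploit the fact that $\mathcal{A}^{\#} = \k[x_1,x_2,\cdots,x_n]$ is genuinely commutative (not merely graded commutative) while $\partial_{\mathcal{A}}$ obeys the sign-twisted Leibniz rule for odd-degree generators. This tension is what pins down the very rigid form of $\partial_{\mathcal{A}}$.

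For the necessity direction, I would write $f_i := \partial_{\mathcal{A}}(x_i) \in \mathcal{A}^2$ and compute $\partial_{\mathcal{A}}(x_i x_j)$ in two ways using the equality $x_i x_j = x_j x_i$: the Leibniz rule with sign $(-1)^{|x_i|}=-1$ gives $f_i x_j - x_i f_j$ on one side and $f_j x_i - x_j f_i$ on the other. Rearranging via the commutativity of $\mathcal{A}^{\#}$ in degree $3$ then produces the key identity $f_i x_j = x_i f_j$ for every pair $i,j$ (with the factor of $2$ being invertible since $\mathrm{char}\,\k = 0$).

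Next, I exploit that $\k[x_1,\cdots,x_n]$ is a UFD with $\gcd(x_i,x_j)=1$ for $i\neq j$: the identity $x_i f_j = f_i x_j$ then forces $x_i \mid f_i$. Writing $f_i = x_i g_i$ with $g_i \in \mathcal{A}^1$ and substituting back gives $x_i x_j(g_i - g_j) = 0$ in the integral domain $\mathcal{A}^{\#}$, so $g_i = g_j$; call the common value $g$. Expanding $g = \sum_{j=1}^n t_j x_j$ in the $\k$-basis of $\mathcal{A}^1$ then gives $\partial_{\mathcal{A}}(x_i) = x_i g = \sum_{j=1}^n t_j x_i x_j$, as asserted. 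The edge case $n=1$ needs a separate line since the divisibility step is vacuous, but it is immediate because $\mathcal{A}^2 = \k x_1^2$.

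For the converse, given $(t_1,\cdots,t_n) \in \Bbb{A}_{\k}^n$, set $g := \sum_j t_j x_j$, define $\partial(x_i) := x_i g$ on generators and extend by the graded Leibniz rule. Consistency with the defining relations $x_i x_j - x_j x_i = 0$ reduces precisely to the identity $f_i x_j = x_i f_j$, which is automatic when $f_i = x_i g$ by commutativity. For $\partial^2 = 0$, I would observe that $\partial^2$ is a degree-$2$ derivation and hence suffices to be checked on generators; the direct computation $\partial(g) = \sum_j t_j (x_j g) = g^2$ yields $\partial^2(x_i) = \partial(x_i)g - x_i \partial(g) = x_i g^2 - x_i g^2 = 0$. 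The main technical point is the UFD divisibility step that locks each $f_i$ into the form $x_i g$ with a single common $g$; everything else is a mechanical application of the graded Leibniz rule.
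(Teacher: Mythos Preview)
Your proof is correct and genuinely different from the paper's. The paper proceeds by brute force: it writes $\partial_{\mathcal{A}}(x_i)=\sum_{j\le k} c^i_{j,k}x_jx_k$ with undetermined coefficients, expands $\partial_{\mathcal{A}}(x_j)x_k-x_j\partial_{\mathcal{A}}(x_k)$ into four blocks $R_1,R_2,S_1,S_2$, and solves the resulting linear system in the $c^i_{j,k}$ to obtain the conditions $c^i_{p,q}=0$ whenever $p\neq i\neq q$ together with the compatibility identities forcing a common tuple $(t_1,\dots,t_n)$; the converse $\partial^2=0$ is then verified by another explicit page-long expansion. Your route replaces all of this with two structural facts about the polynomial ring: the UFD property (so $x_i f_j=x_j f_i$ with $\gcd(x_i,x_j)=1$ forces $x_i\mid f_i$) and the integral-domain property (so $x_ix_j(g_i-g_j)=0$ forces $g_i=g_j$); the converse is dispatched by the one-line computation $\partial(g)=g^2$, whence $\partial^2(x_i)=x_ig^2-x_ig^2=0$. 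Your argument is considerably shorter and more conceptual, and it makes transparent \emph{why} the differential is rigid (unique factorization), whereas the paper's coefficient comparison, while entirely elementary, obscures this. Both arguments use $\mathrm{char}\,\k\neq 2$ at exactly the same point (cancelling the factor of $2$ in $2(f_ix_j-x_if_j)=0$), and your remark that $\partial^2$ is a derivation of degree $2$---so vanishing on generators suffices---is correct and worth stating explicitly.
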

\begin{proof}
Since the differential $\partial_{\mathcal{A}}$ of $\mathcal{A}$ is a $\k$-linear map of degree $1$, we may let
$$
\partial_{\mathcal{A}}(x_i) = \sum\limits_{j=1}^n\sum\limits_{k=j}^nc^i_{j,k}x_jx_k
$$
where $c^i_{j,k}\in \k$, for any $i, j\in \{1,2,\cdots,n\}$ and $k\in \{j+1,\cdots, n\}$
 By
definition, $(\mathcal{A},\partial_{\mathcal{A}})$ is a cochain DG
algebra if and only if $\partial_{\mathcal{A}}$ satisfies the
Leibniz rule and
\begin{align}\label{eqs}
\begin{cases}
\partial_{\mathcal{A}}\circ \partial_{\mathcal{A}}(x_i) = 0, \forall i\in\{1,2,\cdots,n\}   \\
\partial_{\mathcal{A}}(x_jx_k-x_kx_j) = 0, \forall 1\le j<k\le n.
\end{cases}
\end{align}
Since \begin{align*} \partial_{\mathcal{A}}(x_jx_k-x_kx_j)&=\partial_{\mathcal{A}}(x_j)x_k-x_j\partial_{\mathcal{A}}(x_k)-\partial_{\mathcal{A}}(x_k)x_j+x_k\partial_{\mathcal{A}}(x_j)\\
&=2\partial_{\mathcal{A}}(x_j)x_k-2x_j\partial_{\mathcal{A}}(x_k)\\
&=2[\sum\limits_{p=1}^n\sum\limits_{q=p}^nc^j_{p,q}x_px_q]x_k-2x_j[\sum\limits_{p=1}^n\sum\limits_{q=p}^nc^k_{p,q}x_px_q]\\
&=2[\sum\limits_{p=1}^n\sum\limits_{q=p}^nc^j_{p,q}x_px_q]x_k-2x_j[\sum\limits_{q=1}^n\sum\limits_{p=1}^qc^k_{p,q}x_px_q],
\end{align*}
\begin{align*}
&\quad\quad\quad [\sum\limits_{p=1}^n\sum\limits_{q=p}^nc^j_{p,q}x_px_q]x_k \\
&=\sum\limits_{p=1}^{j-1}\sum\limits_{q=p}^{j-1}c_{p,q}^jx_px_qx_k +\sum\limits_{p=1}^{j-1}c_{p,j}^jx_px_jx_k +\sum\limits_{p=1}^{j-1}\sum\limits_{q=j+1}^nc_{p,q}^jx_px_qx_k+c_{j,j}^jx_j^2x_k\\
&\quad \quad +\sum\limits_{q=j+1}^nc_{j,q}^jx_jx_qx_k +\sum\limits_{p=j+1}^n\sum\limits_{q=p}^nc_{p,q}^jx_px_qx_k\\
&=R_1+R_2
\end{align*}
and
\begin{align*}
&\quad\quad\quad x_j[\sum\limits_{q=1}^n\sum\limits_{p=1}^qc^k_{p,q}x_px_q] \\
&=\sum\limits_{q=1}^{k-1}\sum\limits_{p=1}^qc_{p,q}^kx_jx_px_q + \sum\limits_{p=1}^{k-1}c_{p,k}^kx_jx_px_k +c_{k,k}^kx_jx_k^2+\sum\limits_{q=k+1}^n\sum\limits_{p=1}^{k-1}c_{p,q}^kx_jx_px_q\\
&\quad\quad +\sum\limits_{q=k+1}^nc_{k,q}^kx_jx_kx_q+\sum\limits_{q=k+1}^n\sum\limits_{p=k+1}^qc_{p,q}^kx_jx_px_q\\
&=S_1+S_2,
\end{align*}
where
\begin{align*}
\begin{cases}
R_1= \sum\limits_{p=1}^{j-1}\sum\limits_{q=p}^{j-1}c_{p,q}^jx_px_qx_k+\sum\limits_{p=1}^{j-1}\sum\limits_{q=j+1}^nc_{p,q}^jx_px_qx_k+\sum\limits_{p=j+1}^n\sum\limits_{q=p}^nc_{p,q}^jx_px_qx_k \\
R_2= \sum\limits_{p=1}^{j-1}c_{p,j}^jx_px_jx_k + c_{j,j}^jx_j^2x_k +\sum\limits_{q=j+1}^nc_{j,q}^jx_jx_qx_k  \\
S_1= \sum\limits_{q=1}^{k-1}\sum\limits_{p=1}^qc_{p,q}^kx_jx_px_q + \sum\limits_{q=k+1}^n\sum\limits_{p=1}^{k-1}c_{p,q}^kx_jx_px_q + \sum\limits_{q=k+1}^n\sum\limits_{p=k+1}^qc_{p,q}^kx_jx_px_q \\
S_2=\sum\limits_{p=1}^{k-1}c_{p,k}^kx_jx_px_k +c_{k,k}^kx_jx_k^2+\sum\limits_{q=k+1}^nc_{k,q}^kx_jx_kx_q,
\end{cases}
\end{align*}
we obtain that $\partial_{\mathcal{A}}(x_jx_k-x_kx_j) = 0$ if and only if $R_1+R_2-S_1-S_2=0$, which is equivalent to
\begin{align*}
\begin{cases}
R_1=0\\
S_1=0\\
R_2-S_2=0.
\end{cases}
\end{align*}
Since
$$R_1=[\sum\limits_{p=1}^{j-1}\sum\limits_{q=p}^{j-1}c_{p,q}^jx_px_q+\sum\limits_{p=1}^{j-1}\sum\limits_{q=j+1}^nc_{p,q}^jx_px_q+\sum\limits_{p=j+1}^n\sum\limits_{q=p}^nc_{p,q}^jx_px_q]x_k
$$
and $$S_1= x_j[\sum\limits_{q=1}^{k-1}\sum\limits_{p=1}^qc_{p,q}^kx_px_q + \sum\limits_{q=k+1}^n\sum\limits_{p=1}^{k-1}c_{p,q}^kx_px_q + \sum\limits_{q=k+1}^n\sum\limits_{p=k+1}^qc_{p,q}^kx_px_q],$$
we get that $R_1=S_1=0$ if and only if
\begin{align*}
\begin{cases}
 c_{p,q}^j=0, \forall p\in \{1,\cdots, j-1\}, q\in \{p,\cdots,j-1\}\\
c_{p,q}^j=0, \forall p\in \{1,\cdots,j-1\}, q\in \{j+1,\cdots,n\} \\
c_{p,q}^j=0, \forall p\in \{j+1,\cdots, n\}, q\in\{p,\cdots,n\}
\end{cases}
\end{align*}
and
\begin{align*}
\begin{cases}
c_{p,q}^k=0, \forall q\in \{1,\cdots, k-1\}, p\in \{1,\cdots, q\}\\
c_{p,q}^k=0, \forall q\in \{k+1,\cdots, n\}, p\in \{1,\cdots, k-1\}\\
c_{p,q}^k=0, \forall q\in \{k+1,\cdots, n\}, p\in \{k+1,\cdots, q\},
\end{cases}
\end{align*}
which are equivalent to
\begin{align*}
\begin{cases}
c_{p,q}^j=0, \text{if}\quad p\neq j \quad \text{and}\quad q\neq j\\
c_{p,q}^k=0, \text{if}\quad p\neq k \quad \text{and}\quad q\neq k.
\end{cases}
\end{align*}
Since \begin{align*}
R_2-S_2&=\sum\limits_{p=1}^{j-1}c_{p,j}^jx_px_jx_k + c_{j,j}^jx_j^2x_k +\sum\limits_{q=j+1}^nc_{j,q}^jx_jx_qx_k \\
 & \quad\quad- [\sum\limits_{p=1}^{k-1}c_{p,k}^kx_jx_px_k +c_{k,k}^kx_jx_k^2+\sum\limits_{q=k+1}^nc_{k,q}^kx_jx_kx_q]\\
&=\sum\limits_{p=1}^{j-1}(c_{p,j}^j-c_{p,k}^k)x_px_jx_k +c_{j,j}^jx_j^2x_k + \sum\limits_{q=k+1}^n(c_{j,q}^j-c_{k,q}^k)x_jx_kx_q  \\
& \quad\quad+\sum\limits_{q=j+1}^{k-1}c_{j,q}^jx_jx_qx_k + c_{j,k}^jx_jx_k^2- c_{j,k}^kx_j^2x_k -\sum\limits_{p=j+1}^{k-1}c_{p,k}^kx_jx_px_k-c_{k,k}^kx_jx_k^2 \\
&=\sum\limits_{p=1}^{j-1}(c_{p,j}^j-c_{p,k}^k)x_px_jx_k +(c_{j,j}^j-c_{j,k}^k)x_j^2x_k+\sum\limits_{q=k+1}^n(c_{j,q}^j-c_{k,q}^k)x_jx_kx_q\\
& \quad\quad+ \sum\limits_{r=j+1}^{k-1}(c_{j,r}^j- c_{r,k}^k )x_jx_rx_k +(c_{j,k}^j -c_{k,k}^k )x_jx_k^2,
\end{align*}
we conclude that $R_2-S_2=0$ if and only if
$$
\begin{cases}
c_{p,j}^j=c_{p,k}^k, \forall p\in \{1,2,\cdots, j-1\}\\
c_{j,j}^j=c_{j,k}^k \\
c_{j,q}^j=c_{k,q}^k, \forall q\in \{k+1,k+2,\cdots, n\} \\
c_{j,r}^j=c_{r,k}^k, \forall r\in \{ j+1,j+2,\cdots, k-1\}\\
c_{j,k}^j=c_{k,k}^k,
\end{cases}
$$
which is equivalent to
\begin{align*}
\begin{cases}
c_{p,j}^j=c_{p,k}^k, \forall p\in \{1,2,\cdots, j\}\\
c_{j,r}^j=c_{r,k}^k, \forall r\in \{ j+1,j+2,\cdots, k-1\}\\
c_{j,q}^j=c_{k,q}^k, \forall q\in \{k,k+2,\cdots, n\}.
\end{cases}
\end{align*}
Therefore, for any $1\le j<k\le n$, $\partial_{\mathcal{A}}(x_jx_k-x_kx_j) = 0$ if and only if
\begin{align}\label{fincond}
\begin{cases}
c_{p,q}^j=0, \text{if}\quad p\neq j \quad \text{and}\quad q\neq j\\
c_{p,q}^k=0, \text{if}\quad p\neq k \quad \text{and}\quad q\neq k\\
c_{p,j}^j=c_{p,k}^k, \forall p\in \{1,2,\cdots, j\}\\
c_{j,r}^j=c_{r,k}^k, \forall r\in \{ j+1,j+2,\cdots, k-1\}\\
c_{j,q}^j=c_{k,q}^k, \forall q\in \{k,k+2,\cdots, n\}.
\end{cases}
\end{align}\label{diff}
Let $t_i=c_{1,i}^1, i=1,2,\cdots, n$. Then (\ref{fincond}) implies \begin{align}
\partial_{\mathcal{A}}(x_i)=\sum\limits_{j=1}^{i-1}t_jx_jx_i+t_ix_i^2+\sum\limits_{j=i+1}^nt_jx_ix_j, \forall i\in \{1,2,\cdots,n\}.
\end{align}
For any $(t_1,t_2,\cdots, t_n)\in \Bbb{A}_{\k}^n$, we claim that $\partial_{\mathcal{A}}\circ\partial_{\mathcal{A}}(x_i)=0$, if (\ref{diff}) holds,
$\forall i\in \{1,2,\cdots, n\}$. Indeed, if (\ref{diff}) holds, then
\begin{align*}
&\quad\quad\quad \partial_{\mathcal{A}}\circ\partial_{\mathcal{A}}(x_i)=\partial_{\mathcal{A}}[\sum\limits_{j=1}^{i-1}t_jx_jx_i+t_ix_i^2+\sum\limits_{j=i+1}^nt_jx_ix_j]\\
&=\sum\limits_{j=1}^{i-1}t_j\partial_{\mathcal{A}}(x_j)x_i-\sum\limits_{j=1}^{i-1}t_jx_j\partial_{\mathcal{A}}(x_i)+\sum\limits_{j=i+1}^nt_j\partial_{\mathcal{A}}(x_i)x_j-\sum\limits_{j=i+1}^nt_jx_i\partial_{\mathcal{A}}(x_j)\\
&=\sum\limits_{j=1}^{i-1}t_j[(\sum\limits_{l=1}^{j-1}t_lx_lx_j+t_jx_j^2+\sum\limits_{l=j+1}^nt_lx_jx_l)x_i-x_j(\sum\limits_{l=1}^{i-1}t_lx_lx_i+t_ix_i^2+ \sum\limits_{l=i+1}^nt_lx_ix_l)]\\
& + \sum\limits_{j=i+1}^nt_j[(\sum\limits_{l=1}^{i-1}t_lx_lx_i+t_ix_i^2+ \sum\limits_{l=i+1}^nt_lx_ix_l)x_j-x_i(\sum\limits_{l=1}^{j-1}t_lx_lx_j+t_jx_j^2+\sum\limits_{l=j+1}^nt_lx_jx_l)]\\
&=\sum\limits_{j=1}^{i-1}t_jx_i[\sum\limits_{l=1}^{j-1}t_lx_lx_j+t_jx_j^2+\sum\limits_{l=j+1}^nt_lx_jx_l-x_j(\sum\limits_{l=1}^{i-1}t_lx_l+t_ix_i+ \sum\limits_{l=i+1}^nt_lx_l)]\\
& + \sum\limits_{j=i+1}^nt_jx_i[(\sum\limits_{l=1}^{i-1}t_lx_l+t_ix_i+ \sum\limits_{l=i+1}^nt_lx_l)x_j-(\sum\limits_{l=1}^{j-1}t_lx_lx_j+t_jx_j^2+\sum\limits_{l=j+1}^nt_lx_jx_l)].
\end{align*}
Since
\begin{align*}
&\sum\limits_{j=1}^{i-1}t_j[\sum\limits_{l=1}^{j-1}t_lx_lx_j+t_jx_j^2+\sum\limits_{l=j+1}^nt_lx_jx_l-x_j(\sum\limits_{l=1}^{i-1}t_lx_l+t_ix_i+ \sum\limits_{l=i+1}^nt_lx_l)]\\
& + \sum\limits_{j=i+1}^nt_j[(\sum\limits_{l=1}^{i-1}t_lx_l+t_ix_i+ \sum\limits_{l=i+1}^nt_lx_l)x_j-(\sum\limits_{l=1}^{j-1}t_lx_lx_j+t_jx_j^2+\sum\limits_{l=j+1}^nt_lx_jx_l)]\\
&=\sum\limits_{j=1}^{i-1}t_j[-\sum\limits_{l=j}^{i-1}t_lx_lx_j+t_jx_j^2-t_ix_ix_j + \sum\limits_{l=j+1}^it_lx_jx_l]\\
&\quad +\sum\limits_{j=i+1}^nt_j[ -\sum\limits_{l=i}^{j-1}t_lx_lx_j +t_ix_ix_j+\sum\limits_{l=i+1}^jt_lx_lx_j-t_jx_j^2                                                                           ]\\
&=\sum\limits_{j=1}^{i-1}t_j[-\sum\limits_{l=j+1}^{i-1}t_lx_lx_j +\sum\limits_{l=j+1}^{i-1}t_lx_jx_l]+\sum\limits_{j=i+1}^nt_j[ -\sum\limits_{l=i+1}^{j-1}t_lx_lx_j+\sum\limits_{l=i+1}^{j-1}t_lx_lx_j] \\
&=0,
\end{align*}
we have $\partial_{\mathcal{A}}^2(x_i)=0$,  $\forall i\in \{1,2,\cdots,n\}$.

Therefore, $(\mathcal{A},\partial_{\mathcal{A}})$ is a cochain DG algebra if there exist some $t_1,t_2,\cdots,t_n\in \k$ such that
$$\partial_{\mathcal{A}}(x_i)=\sum\limits_{j=1}^{i-1}t_jx_jx_i+t_ix_i^2+\sum\limits_{j=i+1}^nt_jx_ix_j, \forall i\in \{1,2,\cdots,n\}.$$
Conversely, $\forall (t_1,t_2,\cdots, t_n)\in \Bbb{A}_{\k}^n$, we can define a differential $\partial$ on $\k[x_1,x_2,\cdots,x_n]$ by
$$\partial(x_i)=\sum\limits_{j=1}^{i-1}t_jx_jx_i+t_ix_i^2+\sum\limits_{j=i+1}^nt_jx_ix_j,   \forall i\in \{1,2,\cdots,n\},$$
such that $(\k[x_1,x_2,\cdots,x_n],\partial)$ is a DG polynomial algebra, since one can check as above that $$
\begin{cases}
\partial \circ \partial (x_i) = 0, \forall i\in\{1,2,\cdots,n\}   \\
\partial (x_jx_k-x_kx_j) = 0, \forall 1\le j<k\le n,
\end{cases}
$$
if $\partial$ satisfies the
Leibniz rule.
\end{proof}
By Theorem \ref{diffstructure}, the following definition is reasonable.
\begin{defn}{\rm
 Define $\mathcal{A}(t_1,t_2,\cdots, t_n)$ as a cochain DG algebra such that
$$\mathcal{A}(t_1,t_2,\cdots, t_n)^{\#}=\k[x_1,x_2,\cdots,x_n],\quad |x_i|=1,$$ and its differential $\partial_{\mathcal{A}}$ is defined by $$\partial_{\mathcal{A}}(x_i)=\sum\limits_{j=1}^{i-1}t_jx_jx_i+t_ix_i^2+\sum\limits_{j=i+1}^nt_jx_ix_j, \forall i\in \{1,2,\cdots,n\}.$$
Define
$\Omega(x_1,x_2,\cdots,x_n)=\{\mathcal{A}(t_1,t_2,\cdots,t_n)|t_i\in
\k, i=1,2,\cdots, n\}$.
Clearly, $$\Omega(x_1,x_2,\cdots,x_n)\cong \Bbb{A}_{\k}^n.$$}
\end{defn}

\section{isomorphism classes of  dg  polynomial algebras}
In this section, we consider the isomorphism problem for DG polynomial algebras in $\Omega(x_1,x_2,\cdots,x_n)$. We have the following theorem.
\begin{thm}\label{isom}
Let $\mathcal{A}(t_1,t_2,\cdots,t_n)$ and
$\mathcal{A}(t_1',t_2',\cdots,t_n')$ be two points in the
space $\Omega(x_1,x_2,\cdots,x_n)$. Then the DG algebras
$$\mathcal{A}(t_1,t_2,\cdots,t_n)\cong
\mathcal{A}(t_1',t_2',\cdots,t_n')$$ if and only if there
is a matrix $M\in \mathrm{GL}_n(\k)$ such that
$$(t_1',t_2',\cdots,t_n')=(t_1,t_2,\cdots,t_n)M.$$
\end{thm}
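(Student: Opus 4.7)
The plan is to exploit the clean factorization $\partial_{\mathcal{A}}(x_i)=x_i\tau$ with $\tau:=\sum_{l=1}^n t_lx_l$, which follows immediately from Theorem \ref{diffstructure} because the generators commute in the polynomial algebra $\mathcal{A}^{\#}$. This turns the differential on degree $1$ elements into ``right multiplication by $\tau$'', after which a DG isomorphism is essentially pinned down by how it acts on the degree $1$ subspace.

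For the forward direction, let $\varphi\colon \mathcal{A}(t_1,\dots,t_n)\to \mathcal{A}(t_1',\dots,t_n')$ be a DG algebra isomorphism. Since $\varphi$ preserves the grading and both underlying algebras are polynomial algebras generated in degree $1$, $\varphi$ is determined by its restriction to $\mathcal{A}^1$, and that restriction is an invertible linear map of $n$-dimensional $\k$-vector spaces. Writing $\varphi(x_i)=\sum_{j}m_{ij}x_j$ with $M=(m_{ij})\in \mathrm{GL}_n(\k)$, I would apply $\varphi\circ\partial_{\mathcal{A}}=\partial_{\mathcal{A}'}\circ\varphi$ to $x_i$. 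Using the factorization on both sides, this reads
$$\varphi(x_i)\,\varphi(\tau)\;=\;\varphi(x_i)\,\tau',\qquad \tau':=\sum_l t_l' x_l.$$
Since the polynomial algebra is an integral domain and $\varphi(x_i)\neq 0$, cancellation gives $\varphi(\tau)=\tau'$. Expanding $\varphi(\tau)=\sum_l t_l\varphi(x_l)=\sum_k\bigl(\sum_l t_l m_{lk}\bigr)x_k$ and comparing coefficients with $\tau'$ yields exactly $(t_1',\dots,t_n')=(t_1,\dots,t_n)M$.

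For the converse, given such an $M\in \mathrm{GL}_n(\k)$, I would define $\varphi(x_i):=\sum_j m_{ij}x_j$ and extend multiplicatively to a graded algebra automorphism of $\k[x_1,\dots,x_n]$; this is well-defined because $M$ is invertible and the generators commute on both sides. To verify that $\varphi$ intertwines the differentials, the Leibniz rule and multiplicativity let me reduce to checking it on each generator $x_i$, and this is exactly the previous calculation run in reverse: $\varphi(\partial_{\mathcal{A}}(x_i))=\varphi(x_i)\varphi(\tau)=\varphi(x_i)\tau'=\partial_{\mathcal{A}'}(\varphi(x_i))$.

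The points to watch are conventional rather than substantive: choosing the correct row/column convention for $M$ so that the equation comes out as $(t_1',\dots,t_n')=(t_1,\dots,t_n)M$ rather than its transpose, and making the cancellation step rigorous via the integral domain property of $\k[x_1,\dots,x_n]$. I expect no deeper obstacle, because the entire argument collapses onto the single observation that $\partial$ on degree $1$ is multiplication by the element $\tau$, so a DG isomorphism is nothing more than a change of basis on degree $1$ that sends $\tau$ to $\tau'$.
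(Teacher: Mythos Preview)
Your proposal is correct and follows essentially the same approach as the paper: both arguments write the isomorphism as a matrix $M\in\mathrm{GL}_n(\k)$ on degree~$1$, apply the chain-map condition to each generator, factor both sides as $\varphi(x_i)$ times a degree~$1$ element, and cancel using that the polynomial algebra has no zero-divisors. Your explicit naming of $\tau=\sum_l t_l x_l$ makes the presentation slightly cleaner, but the paper carries out the identical computation coordinate-wise without naming this element.
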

\begin{proof}
We write
$\mathcal{A}=\mathcal{A}(t_1,t_2,\cdots,t_n)$ and
$\mathcal{A}'= \mathcal{A}(t_1',t_2',\cdots, t_n')$ for
simplicity. In order to distinguish, we assume that $\mathcal{A}'^{\#}=\k[x_1',x_2',\cdots,x_n']$ with  $|x_i'|=1$, for any $i\in \{1,2,\cdots,n\}$.
If the DG
algebras $\mathcal{A}\cong \mathcal{A}'$, then there exists an
isomorphism $f: \mathcal{A}\to \mathcal{A}'$ of DG algebras. Since
$f^1: \mathcal{A}^1 \to \mathcal{A}'^1$ is a $\k$-linear
isomorphism, we may let $$\left(\begin{array}{c}
                           f(x_1) \\
                           f(x_2) \\
                           \vdots \\
                           f(x_n)
                         \end{array}
                       \right)= M\left(
                         \begin{array}{c}
                           x_1' \\
                           x_2' \\
                           \vdots \\
                           x_n'
                         \end{array}
                       \right)$$
for some  $M=(a_{ij})_{n\times n} \in \mathrm{GL}_n(\k)$.
Since $f$ is a chain map, we have $f\circ \partial_{\mathcal{A}}=\partial_{\mathcal{A}'}\circ f$.
For any $i\in \{1,2,\cdots,n\}$, we have
\begin{align*}\label{Eq1}\tag{Eq1}
&\quad\quad\quad \partial_{\mathcal{A}'}\circ f(x_i)=\partial_{\mathcal{A}'}(\sum\limits_{j=1}^na_{ij}x_j')\\
&=\sum\limits_{j=1}^na_{ij}(\sum\limits_{l=1}^{j-1}t_l'x_l'x_j'+t_j'x_j'^2+\sum\limits_{l=j+1}^nt_l'x_j'x_l')\\
&=\sum\limits_{j=1}^na_{ij}x_j'(\sum\limits_{l=1}^{j-1}t_l'x_l'+t_j'x_j'+\sum\limits_{l=j+1}^nt_l'x_l')\\
&=(\sum\limits_{j=1}^na_{ij}x_j')(\sum\limits_{l=1}^nt_l'x_l')\\
&=(\sum\limits_{l=1}^na_{il}x_l')(\sum\limits_{s=1}^nt_s'x_s')
\end{align*}
and
\begin{align*}\label{Eq2}\tag{Eq2}
&\quad \quad\quad\quad f\circ \partial_{\mathcal{A}}(x_i)=f(\sum\limits_{j=1}^{i-1}t_jx_jx_i+t_ix_i^2+\sum\limits_{j=i+1}^nt_jx_ix_j)\\
&=\sum\limits_{j=1}^{i-1}t_j(\sum\limits_{l=1}^na_{jl}x_l')(\sum\limits_{l=1}^na_{il}x_l')+t_i(\sum\limits_{l=1}^na_{il}x_l')^2+\sum\limits_{j=i+1}^nt_j(\sum\limits_{l=1}^na_{il}x_l')(\sum\limits_{l=1}^na_{jl}x_l')\\
&=\sum\limits_{j=1}^{i-1}t_j(\sum\limits_{s=1}^na_{js}x_s')(\sum\limits_{l=1}^na_{il}x_l')+t_i(\sum\limits_{l=1}^na_{il}x_l')^2+\sum\limits_{j=i+1}^nt_j(\sum\limits_{l=1}^na_{il}x_l')(\sum\limits_{s=1}^na_{js}x_s')\\
&=(\sum\limits_{l=1}^na_{il}x_l')[\sum\limits_{j=1}^{i-1}t_j(\sum\limits_{s=1}^na_{js}x_s')+t_i(\sum\limits_{l=1}^na_{il}x_l')+\sum\limits_{j=i+1}^nt_j(\sum\limits_{s=1}^na_{js}x_s')]\\
&=(\sum\limits_{l=1}^na_{il}x_l')[\sum\limits_{j=1}^{i-1}t_j(\sum\limits_{s=1}^na_{js}x_s')+t_i(\sum\limits_{s=1}^na_{is}x_s')+\sum\limits_{j=i+1}^nt_j(\sum\limits_{s=1}^na_{js}x_s')]\\
&=(\sum\limits_{l=1}^na_{il}x_l')[\sum\limits_{j=1}^nt_j(\sum\limits_{s=1}^na_{js}x_s')]\\
&=(\sum\limits_{l=1}^na_{il}x_l')[\sum\limits_{s=1}^n(\sum\limits_{j=1}^na_{js}t_j)x_s'].
\end{align*}
Since $(a_{ij})_{n\times n} \in \mathrm{GL}_n(\k)$, we have $(a_{i1},a_{i2},\cdots,a_{in})\neq (0,0,\cdots, 0)$. So $\sum\limits_{l=1}^na_{il}x_l'\neq 0$ and $ \partial_{\mathcal{A}'}\circ f(x_i)=f\circ \partial_{\mathcal{A}}(x_i)$ implies that $t_s'=\sum\limits_{j=1}^na_{js}t_j$, for any $s\in \{1,2,\cdots,n\}$. Then we get
 $$(t_1',t_2',\cdots,t_n')=(t_1,t_2,\cdots,t_n)M.$$

 Conversely, if there exists a matrix $M=(a_{ij})_{n\times n}\in \mathrm{GL}_n(\k)$ such that $$(t_1',t_2',\cdots,t_n')=(t_1,t_2,\cdots,t_n)M,$$ then $t_s'=\sum\limits_{j=1}^na_{js}t_j$, for any $s\in \{1,2,\cdots,n\}$. Hence
 \begin{equation}\label{chainmap}(\sum\limits_{l=1}^na_{il}x_l')(\sum\limits_{s=1}^nt_s'x_s')=(\sum\limits_{l=1}^na_{il}x_l')[\sum\limits_{s=1}^n(\sum\limits_{j=1}^na_{js}t_j)x_s'], \forall i\in \{1,2,\cdots,n\}.
 \end{equation}
 Define a linear map $f: \mathcal{A}^1 \to \mathcal{A}'^1$
                        by
 $$\left(\begin{array}{c}
                           f(x_1) \\
                           f(x_2) \\
                           \vdots \\
                           f(x_n)
                         \end{array}
                       \right)= M\left(
                         \begin{array}{c}
                           x_1' \\
                           x_2' \\
                           \vdots \\
                           x_n'
                         \end{array}
                       \right).$$
 Obviously, $f$ is invertible since $M\in \mathrm{GL}_n(\k)$.  We have $$f(x_i)f(x_j)=f(x_j)f(x_i), \quad \forall 1\le i< j\le n,$$ since $\mathcal{A}'$ is a commutative algebra. Hence $f: \mathcal{A}^1\to \mathcal{A}'^1$ can be extended to a morphism of graded algebras between $\k[x_1,x_2,\cdots, x_n]$ and $\k[x_1',x_2',\cdots, x_n']$. We still denote it by $f$. For any $i\in \{1,2,\cdots,n\}$, we still have (\ref{Eq1}) and (\ref{Eq2}). Then (\ref{chainmap}) indicates that $f\circ \partial_{\mathcal{A}}(x_i)=\partial_{\mathcal{A}'}\circ f(x_i)$, which implies that $f$ is a chain map. Hence, $f:A\to A'$ is an isomorphism of DG algebras.
\end{proof}

\begin{cor}\label{isomor}
In space $\Omega(x_1,x_2,\cdots, x_n)$, there are only two isomorphism
 classes $\mathcal{A}(0,0,\cdots,0)$ and $\mathcal{A}(1,0,\cdots, 0)$.
\end{cor}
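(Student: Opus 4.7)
The plan is to invoke Theorem \ref{isom} and reduce the problem to a purely linear-algebraic orbit computation. By Theorem \ref{isom}, two DG polynomial algebras $\mathcal{A}(t_1,\ldots,t_n)$ and $\mathcal{A}(t_1',\ldots,t_n')$ are isomorphic if and only if the row vectors $(t_1,\ldots,t_n)$ and $(t_1',\ldots,t_n')$ lie in the same orbit under the right action of $\mathrm{GL}_n(\k)$ on $\k^n$ by matrix multiplication. Hence the isomorphism classes in $\Omega(x_1,\ldots,x_n)\cong\Bbb{A}_\k^n$ are in bijection with the orbits of this action.

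Next I would identify the orbits. The zero vector is obviously fixed by every $M\in\mathrm{GL}_n(\k)$, so $\{(0,\ldots,0)\}$ is a single orbit, corresponding to the DG algebra $\mathcal{A}(0,0,\ldots,0)$. For any nonzero $(t_1,\ldots,t_n)\in\k^n$, one can extend this vector to a basis $\{v_1,v_2,\ldots,v_n\}$ of $\k^n$ with $v_1=(t_1,\ldots,t_n)$; the matrix $M$ whose $i$-th row is $v_i$ lies in $\mathrm{GL}_n(\k)$, and by construction $(1,0,\ldots,0)M=v_1=(t_1,\ldots,t_n)$. Thus every nonzero row vector lies in the orbit of $(1,0,\ldots,0)$, so $\k^n\setminus\{0\}$ is the second (and only other) orbit, corresponding to $\mathcal{A}(1,0,\ldots,0)$.

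Combining these two observations, $\Omega(x_1,\ldots,x_n)$ decomposes into exactly the two isomorphism classes represented by $\mathcal{A}(0,0,\ldots,0)$ and $\mathcal{A}(1,0,\ldots,0)$. There is no real obstacle here: once Theorem \ref{isom} is available, the argument is the standard fact that $\mathrm{GL}_n(\k)$ acts transitively on $\k^n\setminus\{0\}$, which follows from basis extension.
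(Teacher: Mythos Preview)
Your proof is correct and follows essentially the same approach as the paper: both invoke Theorem \ref{isom} to reduce to the $\mathrm{GL}_n(\k)$-orbit problem on $\k^n$, then show that every nonzero vector lies in the orbit of $(1,0,\ldots,0)$ by exhibiting an invertible matrix with that vector as first row. The only cosmetic difference is that the paper writes down an explicit such matrix (placing the standard basis vectors $e_1,\ldots,\widehat{e_i},\ldots,e_n$ in the remaining rows, where $t_i\neq 0$), whereas you appeal to the general basis-extension principle.
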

\begin{proof}
By Theorem \ref{isom}, $\mathcal{A}(t_1,t_2,\cdots,t_n)\cong \mathcal{A}(t_1',t_2',\cdots,t_n')$ if and only if there is $M\in \mathrm{GL}_n(\k)$ such that $(t_1',t_2',\cdots,t_n')=(t_1,t_2,\cdots,t_n)M$. For any $(t_1,t_2,\cdots,t_n)\neq (0,0,\cdots,0)$, there exists $i\in \{1,2,\cdots, n\}$ such that $t_i\neq 0$.
We have $$(t_1,t_2,\cdots, t_n)=(1,0,\cdots,0)\left(
                         \begin{array}{cccccccc}
                           t_1 & t_2& \cdots &t_{i-1} &t_i&t_{i+1} &\cdots & t_n \\
                           1 & 0  &\cdots  &0&0&0 &\cdots & 0 \\
                           0& 1&\cdots &0&0&0&\cdots & 0 \\
                           \vdots &\vdots &\ddots &\vdots &\vdots &\vdots &\ddots &\vdots \\
                           0&0&\cdots &1&0&0&\cdots & 0\\
                           0&0&\cdots &0&0&1&\cdots &0 \\
                           \vdots &\vdots &\ddots &\vdots &\vdots &\vdots &\ddots &\vdots \\
                           0&0&\cdots &0&0&0&\cdots &1 \\
                         \end{array}
                       \right).$$
Since $$\left|
                       \begin{array}{cccccccc}
                           t_1 & t_2& \cdots &t_{i-1} &t_i&t_{i+1} &\cdots & t_n \\
                           1 & 0  &\cdots  &0&0&0 &\cdots & 0 \\
                           0& 1&\cdots &0&0&0&\cdots & 0 \\
                           \vdots &\vdots &\ddots &\vdots &\vdots &\vdots &\ddots &\vdots \\
                           0&0&\cdots &1&0&0&\cdots & 0\\
                           0&0&\cdots &0&0&1&\cdots &0 \\
                           \vdots &\vdots &\ddots &\vdots &\vdots &\vdots &\ddots &\vdots \\
                           0&0&\cdots &0&0&0&\cdots &1 \\
                         \end{array}
                        \right| = (-1)^{i+1}t_i\neq 0,$$
we have $\mathcal{A}(t_1,t_2,\cdots,t_n)\cong \mathcal{A}(1,0,\cdots,0)$.
Clearly, $\mathcal{A}(1,0,\cdots,0)\neq \mathcal{A}(0,0,\cdots,0)$
since $(1,0,\cdots,0)\neq (0,0,\cdots,0)M$ for any $M\in \mathrm{GL}_n(\k)$. Therefore, there are only two isomorphism classes $\mathcal{A}(0,0,\cdots,0)$ and $\mathcal{A}(1,0,\cdots,0)$ in $\Omega(x_1,x_2,\cdots,x_n)$.
\end{proof}

\begin{cor}\label{auto} For any $\mathcal{A}(t_1,t_2,\cdots,t_n)\in \Omega(x_1,x_2,\cdots,x_n)$, we have
$$\mathrm{Aut}_{dg}(\mathcal{A}(t_1,t_2,\cdots,t_n))\cong \{M\in \mathrm{GL}_n(\k)| (t_1,t_2,\cdots,t_n)=(t_1,t_2,\cdots, t_n)M   \}.$$
\end{cor}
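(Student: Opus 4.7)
My plan is to deduce this corollary directly from Theorem \ref{isom} by specializing to the case when the source and target DG polynomial algebras coincide. Setting $(t_1',t_2',\ldots,t_n')=(t_1,t_2,\ldots,t_n)$ in Theorem \ref{isom}, a DG automorphism $f$ of $\mathcal{A}(t_1,\ldots,t_n)$ exists if and only if there is a matrix $M=(a_{ij})\in\mathrm{GL}_n(\k)$ with
$$(t_1,t_2,\ldots,t_n)=(t_1,t_2,\ldots,t_n)M,$$
and the proof of Theorem \ref{isom} shows that such an $f$ is completely determined by its degree $1$ part via $f(x_i)=\sum_{j=1}^{n}a_{ij}x_j$. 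Conversely, any $M$ satisfying the above relation determines an automorphism by this formula. This already gives the set-theoretic bijection
$$\Phi\colon \mathrm{Aut}_{dg}(\mathcal{A}(t_1,\ldots,t_n))\longrightarrow H_t:=\bigl\{M\in\mathrm{GL}_n(\k)\,\big|\,(t_1,\ldots,t_n)=(t_1,\ldots,t_n)M\bigr\}.$$

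Next I would check that $H_t$ is actually a subgroup of $\mathrm{GL}_n(\k)$: it contains the identity; if $M,N\in H_t$ then $(t_1,\ldots,t_n)(MN)=((t_1,\ldots,t_n)M)N=(t_1,\ldots,t_n)N=(t_1,\ldots,t_n)$, so $MN\in H_t$; and right-multiplying $(t_1,\ldots,t_n)=(t_1,\ldots,t_n)M$ by $M^{-1}$ shows $M^{-1}\in H_t$. So $H_t$ is a subgroup.

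Finally, I need to verify that $\Phi$ (or a small modification of it) respects composition. A short direct computation from $f(x_i)=\sum_j a_{ij}x_j$ and $g(x_i)=\sum_j b_{ij}x_j$ gives
$$(f\circ g)(x_i)=\sum_{k}\Bigl(\sum_{j}b_{ij}a_{jk}\Bigr)x_k,$$
so $M_{f\circ g}=M_g\cdot M_f$; thus $\Phi$ is an anti-homomorphism. To obtain a genuine group isomorphism I would instead use
$$\Psi\colon f\longmapsto M_{f^{-1}},$$
for which $\Psi(f\circ g)=M_{g^{-1}\circ f^{-1}}=M_{f^{-1}}M_{g^{-1}}=\Psi(f)\Psi(g)$; $\Psi$ is still a bijection onto $H_t$ (since $f\mapsto f^{-1}$ is a bijection of $\mathrm{Aut}_{dg}(\mathcal{A})$ and $H_t$ is closed under inversion), which proves the claim.

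There is no real obstacle here: the content is already in Theorem \ref{isom}, and the only point requiring care is the choice of convention so that the bijection of Theorem \ref{isom} is promoted to a group homomorphism rather than just an anti-homomorphism, which is handled by precomposing with inversion.
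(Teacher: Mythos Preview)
Your proposal is correct and follows the same route the paper intends: the corollary is stated in the paper without proof, as an immediate consequence of Theorem~\ref{isom}, and you have simply filled in the details. Your extra care about the anti-homomorphism issue (fixing it via $f\mapsto M_{f^{-1}}$, or equivalently one could use $f\mapsto M_f^{T}$) is a point the paper glosses over but which is needed to justify the group isomorphism symbol~$\cong$.
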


\begin{rem}
We want to consider the homological properties of DG polynomial algebras. This can be reduced to studying that of $\mathcal{A}(0,0,\cdots,0)$ and $\mathcal{A}(1,0,\cdots, 0)$, by Corollary \ref{isomor}. Corollary \ref{auto} characterizes the automorphism group of any DG polynomial algebra.
\end{rem}

\section{cohomology of polynomial dg algebras}
Generally, the cohomology algebra of a DG algebra usually contains much useful information on its properties (cf.\cite{AT,DGI}). In this section, we will compute the cohomology algebra of non-trivial DG polynomial algebra. By Corollary \ref{isomor}, we only need to compute $H(\mathcal{A}(1,0,\cdots,0))$. We have the following proposition.
\begin{prop}\label{cohomology}
 The cohomology algebra of $\mathcal{A}(1,0,\cdots,0)$ is the polynomial algebra  $$\k[\lceil x_2^2\rceil,\lceil x_2x_3\rceil, \cdots,\lceil x_2x_n\rceil, \lceil x_3^2\rceil, \cdots, \lceil x_3x_n\rceil, \cdots,  \lceil x_{n-1}^2\rceil, \lceil x_{n-1}x_n\rceil, \lceil x_n^2\rceil ].$$
\end{prop}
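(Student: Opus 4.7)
The plan is to derive a closed-form expression for $\partial_{\mathcal{A}}$ on arbitrary monomials of $\mathcal{A}(1,0,\ldots,0)$, and then read off the cohomology from the parity-split complex that results.

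Specialising the formula in Theorem \ref{diffstructure} to $(t_1,\ldots,t_n)=(1,0,\ldots,0)$ gives $\partial_{\mathcal{A}}(x_i)=x_1x_i$ for every $i\in\{1,\ldots,n\}$ (with $\partial_{\mathcal{A}}(x_1)=x_1^2$). My first step would be to prove the following closed form: for any monomial $m=x_{i_1}x_{i_2}\cdots x_{i_N}\in\mathcal{A}$ of total degree $N$,
\[
\partial_{\mathcal{A}}(m)=\begin{cases} x_1\cdot m, & N\text{ odd}, \\ 0, & N\text{ even}. \end{cases}
\]
This follows by applying the graded Leibniz rule iteratively, using $|x_{i_j}|=1$. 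Each of the $N$ resulting terms has the form $(-1)^{j-1}x_{i_1}\cdots x_{i_{j-1}}(x_1x_{i_j})x_{i_{j+1}}\cdots x_{i_N}$, which collapses to $(-1)^{j-1}x_1m$ thanks to the commutativity of the underlying polynomial algebra $\mathcal{A}^{\#}$. Summing gives $\left(\sum_{j=1}^{N}(-1)^{j-1}\right)x_1m$, and the alternating sum is $1$ if $N$ is odd and $0$ if $N$ is even.

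With this in hand, $\partial_{\mathcal{A}}$ acts on $\mathcal{A}^i$ as multiplication by $x_1$ for $i$ odd and as zero for $i$ even. Since left multiplication by $x_1$ is injective on the polynomial ring, $H^i(\mathcal{A})=0$ for every odd $i$. For even $i$, every element of $\mathcal{A}^i$ is a cocycle and the coboundaries are precisely $x_1\mathcal{A}^{i-1}$, so $H^i(\mathcal{A})\cong \mathcal{A}^i/x_1\mathcal{A}^{i-1}$. Using the monomial basis, a set of representatives is given by the degree-$i$ monomials in $x_2,\ldots,x_n$, which identifies $H(\mathcal{A})$ with the even-degree part of $\k[x_2,\ldots,x_n]$ sitting inside $\mathcal{A}$.

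To finish, I would observe that this subalgebra is generated by the degree-two elements $x_ix_j$ with $2\le i\le j\le n$, since any even-degree monomial in $x_2,\ldots,x_n$ is a product of such length-two monomials. Each $\lceil x_ix_j\rceil$ is well defined in $H^2(\mathcal{A})$ by the closed-form formula above, yielding the description asserted in the proposition. The only real technical step is the Leibniz computation producing the closed form for $\partial_{\mathcal{A}}(m)$; everything afterwards is essentially linear algebra on a polynomial ring, and the commutativity of $\mathcal{A}^{\#}$ is what makes the cancellations go through so cleanly.
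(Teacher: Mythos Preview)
Your argument follows the same line as the paper's: both hinge on showing that $\partial_{\mathcal{A}}$ acts as multiplication by $x_1$ in odd degrees and vanishes in even degrees, whence injectivity of $x_1$-multiplication kills odd cohomology and $H^{2k}(\mathcal{A})\cong\mathcal{A}^{2k}/x_1\mathcal{A}^{2k-1}$ is spanned by the degree-$2k$ monomials in $x_2,\ldots,x_n$. Your version is slightly tidier in that you derive the closed form $\partial_{\mathcal{A}}(m)=\bigl(\sum_{j=1}^{N}(-1)^{j-1}\bigr)x_1m$ for all monomials at once, whereas the paper first checks $\partial_{\mathcal{A}}(x_ix_j)=0$ and then extends degree by degree via Leibniz; the content is the same.

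One point you should be alert to, and which the paper's own proof does not address either: what both arguments actually establish is that $H(\mathcal{A})$ is the even-degree subalgebra of $\k[x_2,\ldots,x_n]$, generated by the classes $\lceil x_ix_j\rceil$ with $2\le i\le j\le n$. Neither proof checks that these $\binom{n}{2}$ generators are algebraically independent, and for $n\ge 3$ they are not --- for instance $\lceil x_2^2\rceil\,\lceil x_3^2\rceil=\lceil x_2x_3\rceil^2$. So if the proposition is read as asserting that $H(\mathcal{A})$ is a \emph{free} polynomial ring on these generators (as the later use of $\mathrm{gl.dim}\,H(\mathcal{A})=\tfrac{n(n-1)}{2}$ and the appeal to Theorem~\ref{cyprop} would require), that claim fails for $n\ge 3$; the cohomology is rather the second Veronese subring of $\k[x_2,\ldots,x_n]$. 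Your computation of $H(\mathcal{A})$ itself is correct, but you should not conclude freeness from it.
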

\begin{proof}
For simplicity, let $\mathcal{A}=\mathcal{A}(1,0,\cdots,0)$. We have $\partial_{\mathcal{A}}(x_i)=x_1x_i$, for any $i\in \{1,2,\cdots, n\}$. Hence, $$\mathrm{im}(\partial_{\mathcal{A}}^1)=\bigoplus_{i=1}^n \k x_1x_i.$$
For any $i,j\in \{1,2,\cdots,n\}$, we have
$$\partial_{\mathcal{A}}(x_ix_j)=\partial_{\mathcal{A}}(x_i)x_j-x_i\partial_{\mathcal{A}}(x_j) =x_1x_ix_j-x_ix_1x_j
                              =0.$$
Then $\mathrm{ker}(\partial_{\mathcal{A}}^2)=\mathcal{A}^2$ and hence $\mathrm{ker} (\partial_{\mathcal{A}}^{2k})=\mathcal{A}^{2k}$ by the Leibniz rule, for any $k\ge 2$.
Since $\mathcal{A}^2=\bigoplus_{i=1}^n\bigoplus_{j=i}^n \k x_ix_j$, we have $$H^2(\mathcal{A})=\bigoplus_{i=2}^n\bigoplus_{j=i}^n \k x_ix_j.$$
Since $\partial_{\mathcal{A}}(x_i)=x_1x_i$ and $\mathrm{ker}(\partial_{\mathcal{A}}^{2k-2})=\mathcal{A}^{2k-2}$, it is easy to check that
$$\mathrm{im}(\partial_{\mathcal{A}}^{2k-1})=\bigoplus_{\omega_1=1}^{2k}\bigoplus_{\stackrel{\sum\limits_{j=2}^n\omega_j=2k-\omega_1}{x_j\ge 0, j=2,\cdots,n}}\k x_1^{\omega_1}x_2^{\omega_2}\cdots x_n^{\omega_n}.$$ Since $$\mathcal{A}^{2k}=\bigoplus_{\stackrel{\sum\limits_{j=1}^n\omega_j=2k}{x_j\ge 0, j=1,\cdots,n}}\k x_1^{\omega_1}x_2^{\omega_2}\cdots x_n^{\omega_n},$$ we have $$H^{2k}(\mathcal{A})=\bigoplus_{\stackrel{\sum\limits_{j=2}^n\omega_j=2k}{x_j\ge 0, j=2,\cdots,n}}\k x_2^{\omega_2}\cdots x_n^{\omega_n}.$$
For any $k\ge 2$, any cocycle element in $\mathcal{A}^{2k+1}$ can be written as
$\sum\limits_{i=1}^nx_if_i$ for some $f_i\in \mathcal{A}^{2k}, i=1,2,\cdots, n$. We have $\partial_{\mathcal{A}}(\sum\limits_{i=1}^nx_if_i)=\sum\limits_{i=1}^nx_1x_if_i=x_1\sum\limits_{i=1}^nx_if_i=0$.
So $\sum\limits_{i=1}^nx_if_i=0$. Hence, $\mathrm{ker}(\partial_{\mathcal{A}}^{2k+1})=0$ and then $H^{2k+1}(\mathcal{A})=0$. Therefore,
$$H(\mathcal{A})=\k[\lceil x_2^2\rceil,\lceil x_2x_3\rceil, \cdots,\lceil x_2x_n\rceil, \lceil x_3^2\rceil, \cdots, \lceil x_3x_n\rceil, \cdots,  \lceil x_{n-1}^2\rceil, \lceil x_{n-1}x_n\rceil, \lceil x_n^2\rceil ].$$

\end{proof}

By Proposition \ref{cohomology}, one can deduce the following interesting results for DG polynomial algebras.

\begin{thm}\label{asreg}
Any DG polynomial algebra $\mathcal{A}(t_1,t_2,\cdots, t_n)$ is a formal, homologically smooth and Gorenstein DG algebra.
\end{thm}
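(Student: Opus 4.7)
The plan is to reduce via Corollary~\ref{isomor} to the two representatives $\mathcal{A}(0,0,\ldots,0)$ and $\mathcal{A}(1,0,\ldots,0)$ and verify each of the three properties on them; since formality, homological smoothness, and Gorensteinness are all preserved under DG algebra isomorphism, this suffices.

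For $\mathcal{A}(0,0,\ldots,0)$, formality is tautological since $\partial_{\mathcal{A}}=0$. For smoothness and Gorensteinness, the Koszul complex $K=\mathcal{A}\otimes\bigwedge(\xi_1,\ldots,\xi_n)$ with $|\xi_i|=0$ and $D(\xi_i)=x_i$ is a finite semi-free resolution of $\k$; smoothness follows from finiteness of the semi-basis, and Gorensteinness from the standard computation showing that $H(R\Hom_{\mathcal{A}}(\k,\mathcal{A}))\simeq H(\Hom_{\mathcal{A}}(K,\mathcal{A}))$ is one-dimensional, concentrated in the top exterior degree.

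For $\mathcal{A}(1,0,\ldots,0)$, formality is the direct payoff of Proposition~\ref{cohomology}: by commutativity of $\mathcal{A}^{\#}$ and the Leibniz rule,
\[
\partial_{\mathcal{A}}(x_ix_j) \;=\; (x_1x_i)x_j - x_i(x_1x_j) \;=\; 0
\]
for all $i,j\ge 2$, so the subalgebra $\tilde{H}\subset\mathcal{A}$ generated by $\{x_ix_j : 2\le i\le j\le n\}$ lies in $\ker\partial_{\mathcal{A}}$ and carries the trivial differential. The inclusion $\iota:(\tilde{H},0)\hookrightarrow\mathcal{A}$ is therefore a DG algebra morphism, and by Proposition~\ref{cohomology} it identifies $\tilde{H}$ with $H(\mathcal{A})$, so $H(\iota)$ is an isomorphism and $\iota$ is a quasi-isomorphism. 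Hence $\mathcal{A}(1,0,\ldots,0)$ is formal.

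For homological smoothness and Gorensteinness in the non-trivial case, I would construct a finite semi-free resolution of $\k$ over $\mathcal{A}(1,0,\ldots,0)$ as a deformed Koszul complex: take $K=\mathcal{A}\otimes\bigwedge(\xi_1,\ldots,\xi_n)$ with $|\xi_i|=0$ and choose a differential $D$ that combines the Koszul action $\xi_i\mapsto x_i$ with correction terms compensating for $\partial_{\mathcal{A}}(x_i)=x_1x_i$, so that $D^2=0$ holds on all of $K$. Once such a resolution is in place, smoothness is immediate from finiteness of the semi-basis, and Gorensteinness follows from dualizing and verifying one-dimensionality of $H(R\Hom_{\mathcal{A}}(\k,\mathcal{A}))$.

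The main obstacle is pinning down the correct deformed Koszul differential: a naive ansatz such as $D(\xi_i)=x_i+x_1\xi_i$ satisfies $D^2=0$ on individual generators but not on products $\xi_i\xi_j$. A viable strategy is to filter $K$ by the $x_1$-adic degree so that the associated graded recovers the classical Koszul complex (whose acyclicity above degree~$0$ is standard), and then transfer acyclicity back to $(K,D)$ via a spectral-sequence argument; a compatibility check between the filtration and the correction terms then closes the construction.
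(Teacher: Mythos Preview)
Your formality argument for both cases is correct and matches the paper's: the trivial case is tautological, and in the non-trivial case the inclusion $\iota:(H(\mathcal{A}),0)\hookrightarrow\mathcal{A}$ sending $\lceil x_ix_j\rceil\mapsto x_ix_j$ is exactly the quasi-isomorphism the paper writes down.

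For homological smoothness and Gorensteinness, however, the paper takes a different and much shorter route. Rather than building any resolution of $\k$ by hand, it simply reads off that $H(\mathcal{A})$ is a Noetherian Gorenstein graded algebra of finite global dimension (either $\k[x_1,\ldots,x_n]$ or the polynomial algebra in the $\binom{n}{2}$ degree-$2$ generators $\lceil x_ix_j\rceil$), and then invokes three black-box transfer results: \cite[Proposition~1]{Gam} gives Gorensteinness of $\mathcal{A}$ from Gorensteinness of $H(\mathcal{A})$; \cite[Corollary~3.6]{MW2} gives $\k\in\mathrm{D}^c(\mathcal{A})$ from finiteness of $\mathrm{gl.dim}\,H(\mathcal{A})$; and \cite[Corollary~2.7]{MW3} then yields homological smoothness. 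No deformed Koszul complex is ever needed.

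Your explicit-resolution approach works cleanly in the trivial case, but in the non-trivial case you correctly identify that the deformed Koszul differential is not yet pinned down, so as written there is a gap. The quickest fix is already in your hands: once you have the quasi-isomorphism $\iota:(H(\mathcal{A}),0)\to\mathcal{A}$, restriction and extension along $\iota$ give equivalences identifying $\mathrm{D}^c(\mathcal{A})$ with $\mathrm{D}^c(H(\mathcal{A}))$ and carrying $\k$ to $\k$; since $(H(\mathcal{A}),0)$ is again a trivial DG polynomial algebra (now in degree-$2$ variables), your own trivial-case Koszul argument applies verbatim to it and transfers back. This is morally what the cited results in the paper encode.
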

\begin{proof}
For briefness, let $\mathcal{A}=\mathcal{A}(t_1,t_2,\cdots, t_n)$.

 If $(t_1,t_2,\cdots, t_n)=(0,0,\cdots, 0)$, then $\partial_{A}=0$ and $H(\mathcal{A})=\mathcal{A}^{\#}=k[x_1,x_2,\cdots, x_n]$.  Hence $H(\mathcal{A})$ is a Noetherian, Gorenstein graded algebra with $\mathrm{gl.dim}H(\mathcal{A})=n$. Therefore, $\mathcal{A}$ is Gorenstein and $\k\in \mathrm{D}^c(\mathcal{A})$ by \cite[Proposition 1]{Gam} and \cite[Corollary 3.6]{MW2}, respectively.  Hence $\mathcal{A}$ is homologically smooth by \cite[Corollary 2.7]{MW3}.

If $(t_1,t_2,\cdots, t_n)\neq (0,0,\cdots, 0)$, then $\mathcal{A}\cong \mathcal{A}(1,0,\cdots, 0)$ by Corollary \ref{isomor}. We have $$H(\mathcal{A})\cong\k[\lceil x_2^2\rceil,\lceil x_2x_3\rceil, \cdots,\lceil x_2x_n\rceil, \lceil x_3^2\rceil, \cdots, \lceil x_3x_n\rceil, \cdots,  \lceil x_{n-1}^2\rceil, \lceil x_{n-1}x_n\rceil, \lceil x_n^2\rceil ]$$
by Theorem \ref{cohomology}. So $H(\mathcal{A})$ is a Noetherian, Gorenstein graded algebra with $$\mathrm{gl.dim}H(\mathcal{A})=\frac{n(n-1)}{2}<\infty.$$ By \cite[Proposition 1]{Gam}, $\mathcal{A}$ is Gorenstein.  And $\mathcal{A}$ is homologically smooth by \cite[Corollary 3.6]{MW2} and \cite[Corollary 2.7]{MW3}. We can define a morphism of DG algebras $
\iota: (H(\mathcal{A}),0) \to (\mathcal{A},\partial_{\mathcal{A}})$ by
          $\iota(\lceil x_ix_j\rceil) = x_ix_j$, for all $2\le i\le j\le n$. One sees easily that $\iota$
is a quasi-isomorphism.  So $\mathcal{A}$ is formal.
\end{proof}

\section{Calabi-Yau properties of polynomial dg algebras}
By Theorem \ref{asreg}, we know that any DG polynomial algebra $\mathcal{A}(t_1,t_2,\cdots, t_n)$ is a formal, homologically smooth and Gorenstein DG algebra. It is natural for one to ask whether $\mathcal{A}(t_1,t_2,\cdots, t_n)$ is Calabi-Yau. We completely solve this problem in this section.
\begin{rem}
For any connected cochain DG algebra $\mathcal{A}$, one sees that $H(\mathcal{A}^e)=H(\mathcal{A})^e$ is a connected graded algebra.
We should emphasize that the multiplication of $H(\mathcal{A})^e$ is defined by $$(\lceil c\rceil\otimes \lceil d\rceil)\cdot (\lceil e\rceil\otimes \lceil f\rceil) =   (-1)^{|d|\cdot |e|} \lceil c\rceil \lceil e\rceil\otimes \lceil d\rceil\diamond \lceil f\rceil=(-1)^{|d|\cdot |e|+|d|\cdot |f|} \lceil ce\rceil\otimes \lceil fd\rceil,$$ for any cocycle elements $c, e\in \mathcal{A}$ and $d, f\in \mathcal{A}^{op}$. In the proof of the following two theorems, we need to construct minimal free resolutions of ${}_{H(\mathcal{A})^e}H(\mathcal{A})$ and $H(\mathcal{A})_{H(\mathcal{A})^e}$. We remind the readers to remember our emphasis above.
\end{rem}
\begin{thm}\label{cyprop}
Let $\mathcal{A}$ be a connected cochain DG algebra such that $$H(\mathcal{A})=\k[\lceil y_1\rceil, \cdots, \lceil y_m\rceil ],$$ for some central, cocycle and degree $2$ elements $y_1,\cdots, y_m$  in $\mathcal{A}$. Then $\mathcal{A}$ is a $(-m)$-Calabi-Yau DG algebra.
\end{thm}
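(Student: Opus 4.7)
The strategy is to build a Koszul-type minimal semi-free resolution of ${}_{\mathcal{A}^e}\mathcal{A}$ and then identify its $\mathcal{A}^e$-dual with $\Sigma^{m}\mathcal{A}$ in $\mathrm{D}((\mathcal{A}^e)^{op})$; by Definition \ref{basicdef}(5), this is precisely the $(-m)$-Calabi-Yau condition.

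Concretely, since each $y_i$ is a central cocycle of even degree $2$ in $\mathcal{A}$, the element $z_i:=y_i\otimes 1-1\otimes y_i$ is a central cocycle of degree $2$ in $\mathcal{A}^e$. Introduce cohomological-degree-$1$ symbols $e_1,\dots,e_m$ and set
$$F=\mathcal{A}^e\otimes_\k \Lambda(e_1,\dots,e_m),$$
a free graded $\mathcal{A}^e$-module of rank $2^m$. Define its differential by the internal differential of $\mathcal{A}^e$ on the left factor, by $d_F(e_i)=z_i$, and by the Leibniz rule; centrality and evenness of the $z_i$ force $d_F^2=0$. The wedge-length filtration makes $F$ semi-free with finite semi-basis $\{e_I\}_{I\subseteq\{1,\dots,m\}}$. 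To see that the augmentation $\mu\colon F\to\mathcal{A}$, $1\otimes 1\mapsto 1$, is a quasi-isomorphism, I would examine the spectral sequence of this filtration: its $E_1$-page is $H(\mathcal{A})^e\otimes\Lambda(e_1,\dots,e_m)$ with $d_1(e_i)=\lceil y_i\rceil\otimes 1-1\otimes\lceil y_i\rceil$, which is the classical Koszul complex of $H(\mathcal{A})=\k[\lceil y_1\rceil,\dots,\lceil y_m\rceil]$ over its enveloping algebra. Since these $d_1(e_i)$ form a regular sequence in the $2m$-variable polynomial ring $H(\mathcal{A})^e$, the Koszul complex resolves $H(\mathcal{A})$, so the spectral sequence collapses at $E_2$ and $H(\mu)$ is an isomorphism. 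Thus $F$ is a minimal semi-free resolution of $\mathcal{A}$ with a finite semi-basis, and in particular $\mathcal{A}$ is homologically smooth.

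Next I would apply $\Hom_{\mathcal{A}^e}(-,\mathcal{A}^e)$ to $F$. The result has $\mathcal{A}^e$-basis $\{e_I^*\}_{I\subseteq\{1,\dots,m\}}$ with $|e_I^*|=-|I|$, the top class sitting in degree $-m$. Using the top wedge $e_{12\cdots m}$ as a volume form, the Poincar\'e pairing $e_I\wedge e_{I^c}=\pm e_{12\cdots m}$ sets up a degree-$m$ map of graded right $\mathcal{A}^e$-modules
$$\Phi\colon\Sigma^{m}F\longrightarrow\Hom_{\mathcal{A}^e}(F,\mathcal{A}^e),\qquad e_I\mapsto \pm e_{I^c}^*.$$
Because every $z_i$ is central and even, a sign check shows that $\Phi$ intertwines the differentials and hence is a DG isomorphism. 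Composing with $\mu$ then yields
$$R\Hom_{\mathcal{A}^e}(\mathcal{A},\mathcal{A}^e)\cong\Hom_{\mathcal{A}^e}(F,\mathcal{A}^e)\cong \Sigma^{m}F\cong\Sigma^{m}\mathcal{A}$$
in $\mathrm{D}((\mathcal{A}^e)^{op})$, which is exactly the $(-m)$-Calabi-Yau condition.

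The main technical obstacle is this last step: the right $\mathcal{A}^e$-action on $\Hom_{\mathcal{A}^e}(F,\mathcal{A}^e)$ carries the $(-1)^{|d||e|+|d||f|}$-sign spelled out in the Remark preceding the theorem, and this must be reconciled with the Koszul signs and with the signs from the Poincar\'e pairing $e_I\wedge e_{I^c}=\pm e_{12\cdots m}$. It is precisely the evenness of each $y_i$ that collapses the bulk of these signs and makes the self-duality of the Koszul complex work out cleanly.
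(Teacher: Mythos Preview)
Your proposal is correct and follows essentially the same route as the paper: build the Koszul-type semi-free resolution $F=\mathcal{A}^e\otimes\Lambda(e_1,\dots,e_m)$ with $d(e_i)=y_i\otimes1-1\otimes y_i$, and then exploit the self-duality of the exterior algebra to identify $\Hom_{\mathcal{A}^e}(F,\mathcal{A}^e)$ with $\Sigma^m\mathcal{A}$. The only cosmetic differences are that the paper obtains $F$ via the Eilenberg--Moore lifting of the graded Koszul resolution of $H(\mathcal{A})$ rather than via your spectral-sequence argument, and that it writes down a separate right resolution $G$ and an explicit map $\theta\colon\Hom_{\mathcal{A}^e}(F,\mathcal{A}^e)\to\Sigma^mG$, $(\Sigma^ke_{i_1\cdots i_k})^*\mapsto(-1)^{\sum i_j}\Sigma^{2m-k}\lambda_{\overline{i_1\cdots i_k}}$, with the sign verification carried out in full---this is exactly your Poincar\'e-pairing map $\Phi$ unpacked.
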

\begin{proof}
The graded left $H(\mathcal{A})^e$-module $H(\mathcal{A})$ admits a minimal free resolution:
\begin{align*}
0\leftarrow H(\mathcal{A})\stackrel{\mu}{\leftarrow}H(\mathcal{A})^e\stackrel{d_1}{\leftarrow}F_1\stackrel{d_2}{\leftarrow}F_2\stackrel{d_3}{\leftarrow}\cdots\stackrel{d_{m-1}}{\leftarrow} F_{m-1}\stackrel{d_m}{\leftarrow} F_m  \stackrel{0}{\leftarrow} 0,
\end{align*}
where $\mu: H(\mathcal{A})^e\to H(\mathcal{A})$ is defined by $\mu(\lceil a\rceil \otimes \lceil b\rceil)=\lceil ab\rceil$, each $F_k$ is a free $H(\mathcal{A})^e$-module $$\bigoplus_{e_{i_1\cdots i_k}\in E_k} H(\mathcal{A})^ee_{i_1\cdots i_k}$$ of rank $C_m^k$ with basis $$E_k=\{e_{i_1\cdots i_k}|1\le i_1<i_2<\cdots <i_k\le m\},  \quad |e_{i_1\cdots i_k}|=2k,$$ and $d_k$ is defined by
\begin{align*}
d_k(e_{i_1\cdots i_k})=\sum\limits_{j=1}^k(-1)^{j-1} (\lceil y_{i_j}\rceil\otimes 1-1\otimes \lceil y_{i_j}\rceil)e_{i_1\cdots \hat{i_j}\cdots i_k},
\end{align*}
for any $k=1,2,\cdots, m$.
Applying the constructing procedure of Eilenberg-Moore resolution, we can construct a minimal semi-free resolution $F$ of the DG $\mathcal{A}^e$-module $\mathcal{A}$. Since $y_1,y_2,\cdots, y_m$ are central elements in $\mathcal{A}$, it is not difficult to check that
$$F^{\#}=\mathcal{A}^{e\#}\oplus [\bigoplus_{k=1}^m \bigoplus_{e_{i_1\cdots i_k}\in E_k}\mathcal{A}^{e\#}\Sigma^ke_{i_1\cdots i_k}]$$ and
$\partial_{F}$ is defined by
\begin{align*}
\partial_{F}(\Sigma^ke_{i_1\cdots i_k})=\sum\limits_{j=1}^k(-1)^{j-1} (y_{i_j}\otimes 1-1\otimes y_{i_j})\Sigma^{k-1}e_{i_1\cdots \hat{i_j}\cdots i_k},
\end{align*}
for any $e_{i_1\cdots i_k}\in E_k$ and $k=1,2,\cdots, m$. Clearly, $\mathcal{A}$ is homologically smooth since $F$ has a finite semi-basis.

Similarly, the right graded $H(\mathcal{A})^e$-module $ H(\mathcal{A})$ has a minimal free resolution
\begin{align}\label{right}
0\leftarrow  H(\mathcal{A})\stackrel{\tau}{\leftarrow} H(\mathcal{A})^e\stackrel{d_1}{\leftarrow}G_1\stackrel{d_2}{\leftarrow}G_2\stackrel{d_3}{\leftarrow}\cdots\stackrel{d_{m-1}}{\leftarrow} G_{m-1}\stackrel{d_m}{\leftarrow} G_m  \stackrel{0}{\leftarrow} 0,
\end{align}
where $\tau: H(\mathcal{A})^e\to H(\mathcal{A})$ is defined by $\tau(\lceil a\rceil \otimes \lceil b\rceil)=(-1)^{|a|\cdot|b|}\lceil ba\rceil$, each $G_k$ is a free right $H(\mathcal{A})^e$-module $$\bigoplus_{\lambda_{i_1\cdots i_k}\in \Lambda_k}\lambda_{i_1\cdots i_k} H(\mathcal{A})^e$$ of rank $C_m^k$ with basis $$\Lambda_k=\{\lambda_{i_1\cdots i_k}|1\le i_1<i_2<\cdots <i_k\le m\}, \quad |\lambda_{i_1\cdots i_k}|=2k,$$ and the differentials are defined by
\begin{align*}
&d_1(\lambda_{i_1})= (1\otimes \lceil y_{i_1}\rceil-\lceil y_{i_1}\rceil\otimes 1),\quad   \forall \lambda_{i_1}\in \Lambda_1                \\
&d_k(\lambda_{i_1\cdots i_k})=\sum\limits_{j=1}^k(-1)^{k-j}\lambda_{i_1\cdots \hat{i_j}\cdots i_k} (1\otimes \lceil y_{i_j}\rceil-\lceil y_{i_j}\rceil\otimes 1), \quad \forall \lambda_{i_1\cdots i_k} \in \Lambda_k,
\end{align*}
$k=2,\cdots, m.$ From the free resolution (\ref{right}), we can construct the Eilenberg-Moore resolution $G$ of $\mathcal{A}_{\mathcal{A}^e}$. We have
$$G^{\#}=\mathcal{A}^{e\#}\oplus [\bigoplus_{k=1}^m \bigoplus_{\lambda_{i_1\cdots i_k}\in \Lambda_k}\Sigma^k\lambda_{i_1\cdots i_k}\mathcal{A}^{e\#}]$$ and
$\partial_{G}$ is defined by
\begin{align*}
&\partial_{G}(1)=0, \quad\partial_{G}(\Sigma \lambda_{i_1})=(1\otimes y_{i_1}-y_{i_1}\otimes 1), \quad \forall \lambda_{i_1}\in \Lambda_1\\
&\partial_{G}(\Sigma^k\lambda_{i_1\cdots i_k})=\sum\limits_{j=1}^k(-1)^{k-j} \Sigma^{k-1}\lambda_{i_1\cdots \hat{i_j}\cdots i_k}(1\otimes y_{i_j}-y_{i_j}\otimes 1),
\end{align*}
for any $\lambda_{i_1\cdots i_k}\in \Lambda_k$ and $k=2,\cdots, m$. Hence $$\Sigma^m G^{\#}= \Sigma^m1\mathcal{A}^{e\#}\oplus [\bigoplus_{k=1}^m \bigoplus_{\lambda_{i_1\cdots i_k}\in \Lambda_k}\Sigma^{m+k}\lambda_{i_1\cdots i_k}\mathcal{A}^{e\#}]$$ with
\begin{align*}
&\partial_{\Sigma^mG}(\Sigma^m1)=0, \quad\partial_{\Sigma^mG}(\Sigma^{m+1} \lambda_{i_1})=(-1)^m\Sigma^m1(1\otimes y_{i_1}-y_{i_1}\otimes 1), \quad \forall \lambda_{i_1}\in \Lambda_1\\
&\partial_{\Sigma^m G}(\Sigma^{m+k}\lambda_{i_1\cdots i_k})=\sum\limits_{j=1}^k(-1)^{m+k-j} \Sigma^{m+k-1}\lambda_{i_1\cdots \hat{i_j}\cdots i_k}(1\otimes y_{i_j}-y_{i_j}\otimes 1),
\end{align*}

The DG right $\mathcal{A}^e$-module $\Hom_{\mathcal{A}^e}(F,\mathcal{A}^e)$ is a minimal semi-free DG module whose underlying graded module is
\begin{align*}
\{\k1^*\oplus [\bigoplus_{k=1}^m \bigoplus_{e_{i_1\cdots i_k}\in E_k}\k(\Sigma^ke_{i_1\cdots i_k})^*]\}\otimes (\mathcal{A}^e)^{\#}.
\end{align*}
The differential $\partial_{\Hom}$ of $\Hom_{\mathcal{A}^e}(F,\mathcal{A}^e)$ is defined by $$\partial_{\Hom}(f)=\partial_{\mathcal{A}^e} \circ f -(-1)^{|f|}f\circ \partial_F,$$
for any graded element $f\in \Hom_{\mathcal{A}^e}(F,\mathcal{A}^e)$. So we have  $\partial_{\Hom}[(\Sigma^me_{1\cdots m})^*]=0$,
\begin{small}
\begin{align*}
&\partial_{\Hom}[(\Sigma^ke_{i_1\cdots i_k})^*]=\sum\limits_{e_{i_1\cdots i_jli_{j+1}\cdots i_k}\in E_{k+1}}(-1)^{j-k}(\Sigma^{k+1}e_{i_1\cdots i_jli_{j+1}\cdots i_k})^*(1\otimes y_l-y_l\otimes 1),
\end{align*}
\end{small}
for any $k\in \{1,2,\cdots, m-1\}$, and
\begin{align*}
&\partial_{\Hom}[1^*]=\sum\limits_{i=1}^m(\Sigma e_i)^*(1\otimes y_i-y_i\otimes 1).
\end{align*}
Define an $(\mathcal{A}^e)^{op}$-linear map
$ \theta: \Hom_{\mathcal{A}^e}(F,\mathcal{A}^e)\to \Sigma^m G$
 by $$ \theta: \begin{cases}
(\Sigma^me_{1\cdots m})^* \,\, \mapsto (-1)^{\frac{m(m+1)}{2}} \Sigma^m 1,   \\
(\Sigma^k e_{i_1\cdots i_k})^* \,\, \mapsto (-1)^{\sum\limits_{j=1}^k i_j}\Sigma^{2m-k}\lambda_{\overline{i_1\cdots i_{k}}}, \quad \forall k\in \{1,\cdots, m-1\},\\
1^*  \mapsto \,\, \Sigma^{2m} \lambda_{1\cdots m},
\end{cases}
$$
where $\overline{i_1\cdots i_k}$ is the $m-k$ integers arranged from small to large obtained by deleting $i_1,i_2,\cdots, i_k$ from $\{1,2,\cdots, m\}$. We claim that $\theta$ is a chain map. Indeed, we have $$\theta\circ \partial_{\Hom} [(\Sigma^me_{1\cdots m})^*] =0=(-1)^{\frac{m(m+1)}{2}}\partial_{\Sigma^m G}[\Sigma^m 1]= \partial_{\Sigma^m G}\circ \theta [(\Sigma^me_{1\cdots m})^*]$$ and $\theta\circ \partial_{\Hom}[1^* ]=\partial_{\Sigma^m G}\circ \theta[1^*]$ since  \begin{align*}
\quad\quad \theta\circ \partial_{\Hom}[1^* ]&=\theta[\sum\limits_{i=1}^m(\Sigma e_i)^*(1\otimes y_i-y_i\otimes 1)]\\
&=\sum\limits_{i=1}^m(-1)^{i}\Sigma^{2m-1}\lambda_{1\cdots(i-1)(i+1)\cdots m}(1\otimes y_i-y_i\otimes 1)
\end{align*}
and
\begin{align*}
\partial_{\Sigma^m G}\circ \theta[1^* ]& =\partial_{\Sigma^m G}[\Sigma^{2m} \lambda_{1\cdots m}]\\
&=\sum\limits_{i=1}^{m}(-1)^{m+m-i}\Sigma^{2m-1}\lambda_{1\cdots (i-1)(i+1)\cdots m}(1\otimes y_i-y_i\otimes 1)\\
&=\sum\limits_{i=1}^{m}(-1)^{i}\Sigma^{2m-1}\lambda_{1\cdots (i-1)(i+1)\cdots  m}(1\otimes y_i-y_i\otimes 1).
\end{align*} Furthermore, for any $k\in \{1,2,\cdots m-1\}$, we have
$$\partial_{\Sigma^m G}\circ \theta [(\Sigma^ke_{i_1\cdots i_k})^*]=\theta\circ \partial_{\Hom} [(\Sigma^ke_{i_1\cdots i_k})^*]$$ since
\begin{align*}
&\quad\quad \theta\circ \partial_{\Hom} [(\Sigma^ke_{i_1\cdots i_k})^*]\\
&=\theta [\sum\limits_{e_{i_1\cdots i_jli_{j+1}\cdots i_k}\in E_{k+1}}(-1)^{j-k}(\Sigma^{k+1}e_{i_1\cdots i_jli_{j+1}\cdots i_k})^*(1\otimes y_l-y_l\otimes 1) ]\\
&=\sum\limits_{e_{i_1\cdots i_jli_{j+1}\cdots i_k}\in E_{k+1}}(-1)^{j-k+\sum\limits_{j=1}^k i_j+l}\Sigma^{2m-k-1}\lambda_{\overline{i_1\cdots i_jli_{j+1}\cdots i_k}}(1\otimes y_l-y_l\otimes 1)
\end{align*}
and
\begin{align*}
&\quad\quad \partial_{\Sigma^m G}\circ \theta [(\Sigma^ke_{i_1\cdots i_k})^*]\\
&=\partial_{\Sigma^m G}[(-1)^{\sum\limits_{j=1}^k i_j}\Sigma^{2m-k}\lambda_{\overline{i_1\cdots i_{k}}}]\\
&=\sum\limits_{e_{i_1\cdots i_jli_{j+1}\cdots i_k}\in E_{k+1}}(-1)^{\sum\limits_{j=1}^k i_j +2m-k-l+j}\Sigma^{2m-k-1}\lambda_{\overline{i_1\cdots i_jli_{j+1}\cdots i_k}}(1\otimes y_l-y_l\otimes 1).
\end{align*}
So $\theta$ is an isomorphism of DG $\mathcal{A}^{op}$-modules and
$\Hom_{\mathcal{A}^e}(F,\mathcal{A}^e)\cong \Sigma^m\mathcal{A}$ in $\mathrm{D}((\mathcal{A}^e)^{op})$.
Therefore, $\mathcal{A}$ is a $(-m)$-Calabi-Yau DG algebra.
\end{proof}

\begin{cor}\label{non-zero}
The DG polynomial algebra $\mathcal{A}(t_1,t_2,\cdots, t_n)$ is a $\frac{-n(n-1)}{2}$-Calabi-Yau DG algebra if $(t_1,t_2,\cdots, t_n)\neq (0,0,\cdots, 0)$.
\end{cor}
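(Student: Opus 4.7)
The plan is to reduce immediately to the canonical representative and then quote Theorem \ref{cyprop}. By Corollary \ref{isomor}, any DG polynomial algebra $\mathcal{A}(t_1,\ldots,t_n)$ with $(t_1,\ldots,t_n)\neq (0,\ldots,0)$ is isomorphic as a DG algebra to $\mathcal{A}(1,0,\ldots,0)$. Since the Calabi-Yau property (in particular, the dimension) is invariant under DG algebra isomorphism, it suffices to show that $\mathcal{A}(1,0,\ldots,0)$ is $\frac{-n(n-1)}{2}$-Calabi-Yau.

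Next I would verify that $\mathcal{A}=\mathcal{A}(1,0,\ldots,0)$ meets the hypotheses of Theorem \ref{cyprop}. By Proposition \ref{cohomology},
$$H(\mathcal{A})=\k[\lceil x_2^2\rceil,\lceil x_2x_3\rceil,\ldots,\lceil x_2x_n\rceil,\lceil x_3^2\rceil,\ldots,\lceil x_{n-1}x_n\rceil,\lceil x_n^2\rceil],$$
a polynomial algebra on $m=(n-1)+(n-2)+\cdots+1=\frac{n(n-1)}{2}$ generators. For the generators themselves, choose the cocycle lifts $y_{ij}=x_ix_j\in\mathcal{A}^2$ for $2\le i\le j\le n$. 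Each $y_{ij}$ has degree $2$; they are cocycles because the computation in the proof of Proposition \ref{cohomology} gives $\partial_{\mathcal{A}}(x_ix_j)=x_1x_ix_j-x_ix_1x_j=0$; and they are central in $\mathcal{A}$ since $\mathcal{A}^{\#}=\k[x_1,\ldots,x_n]$ is a commutative algebra, so every product of two generators commutes with every element of $\mathcal{A}$.

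Having verified all hypotheses, Theorem \ref{cyprop} applies with $m=\frac{n(n-1)}{2}$ and concludes that $\mathcal{A}$ is a $(-m)$-Calabi-Yau DG algebra, i.e., $\frac{-n(n-1)}{2}$-Calabi-Yau. The isomorphism $\mathcal{A}(t_1,\ldots,t_n)\cong\mathcal{A}(1,0,\ldots,0)$ then transfers this property back to the original DG polynomial algebra, completing the proof.

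There is essentially no hard step: all the technical work has been absorbed into Corollary \ref{isomor}, Proposition \ref{cohomology}, and Theorem \ref{cyprop}. The only thing to be careful about is the verification of centrality, which is immediate here because $\mathcal{A}^{\#}$ is genuinely a (commutative) polynomial algebra rather than a graded-commutative exterior algebra, so no sign subtleties intervene.
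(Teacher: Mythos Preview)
Your proof is correct and follows essentially the same approach as the paper: reduce to $\mathcal{A}(1,0,\ldots,0)$ via Corollary~\ref{isomor}, invoke Proposition~\ref{cohomology} to identify $H(\mathcal{A})$ as a polynomial algebra on $\frac{n(n-1)}{2}$ degree-$2$ generators, and apply Theorem~\ref{cyprop}. You are, if anything, slightly more explicit than the paper in verifying the centrality and cocycle hypotheses for the lifts $x_ix_j$ and in justifying that the Calabi-Yau property transfers along a DG algebra isomorphism.
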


\begin{proof}
By Corollary \ref{isomor}, we have $\mathcal{A}(t_1,t_2,\cdots, t_n)\cong \mathcal{A}(1,0,\cdots, 0)$. So we only need to show $\mathcal{A}(1,0,\cdots, 0)$ is a $\frac{-n(n-1)}{2}$-Calabi-Yau DG algebra. By Theorem \ref{cohomology}, the cohomology graded algebra of $\mathcal{A}(1,0,\cdots, 0)$ is
$$\k[\lceil x_2^2\rceil,\lceil x_2x_3\rceil, \cdots,\lceil x_2x_n\rceil, \lceil x_3^2\rceil, \cdots, \lceil x_3x_n\rceil, \cdots,  \lceil x_{n-1}^2\rceil, \lceil x_{n-1}x_n\rceil, \lceil x_n^2\rceil ].$$ Theorem \ref{cyprop} indicates that $\mathcal{A}(1,0,\cdots, 0)$ is a $\frac{-n(n-1)}{2}$-Calabi-Yau DG algebra.
\end{proof}

\begin{thm}\label{degreeone}
Let $\mathcal{A}$ be a connected cochain DG algebra such that $$H(\mathcal{A})=\k[\lceil y_1\rceil, \cdots, \lceil y_m\rceil ],$$ for some central, cocycle and degree $1$ elements $y_1,\cdots, y_m$  in $\mathcal{A}$. Then $\mathcal{A}$ is a Koszul, homologically smooth and Gorenstein DG algebra. Moreover, $\mathcal{A}$ is $0$-Calabi-Yau if and only if $m$ is an odd integer.
\end{thm}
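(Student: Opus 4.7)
The plan is to adapt the argument of Theorem~\ref{cyprop} to polynomial generators in degree $1$, from which all four claims will flow from a single Koszul-type semi-free resolution; the new feature of odd-degree generators is a parity obstruction that appears only at the final dualization step.

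First I build a minimal semi-free resolution $F$ of ${}_{\mathcal{A}^e}\mathcal{A}$ by lifting the classical Koszul resolution of the diagonal $H(\mathcal{A})^e$-module $H(\mathcal{A})=\k[y_1,\cdots,y_m]$ via the Eilenberg--Moore procedure, exactly as in Theorem~\ref{cyprop}; centrality of the cocycles $y_i$ in $\mathcal{A}$ ensures the lift goes through. The resulting $F$ has $\binom{m}{k}$ semi-basis elements $\Sigma^k e_{i_1\cdots i_k}$ with $|e_{i_1\cdots i_k}|=k$, so since $\Sigma^k$ shifts cohomological degree by $-k$, every semi-basis element sits in degree $0$. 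The finite semi-basis gives homological smoothness, and its concentration in degree $0$ gives Koszulness via the equivalent formulation in Definition~\ref{basicdef}(3). For Gorensteinness I invoke \cite[Proposition 1]{Gam}, since $H(\mathcal{A})=\k[y_1,\cdots,y_m]$ is a Noetherian Gorenstein graded algebra of global dimension $m$.

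For the Calabi-Yau dichotomy I construct in parallel the minimal semi-free resolution $G$ of $\mathcal{A}_{\mathcal{A}^e}$, whose semi-basis $\Sigma^k\lambda_{i_1\cdots i_k}$ is also in degree $0$, and dualize $F$: the complex $\mathrm{Hom}_{\mathcal{A}^e}(F,\mathcal{A}^e)$ has semi-basis $\{1^*,(\Sigma^k e_{i_1\cdots i_k})^*\}$ in degree $0$, so the degrees already match the $0$-Calabi-Yau shift. I then mimic the chain map $\theta$ of Theorem~\ref{cyprop} by
$$(\Sigma^k e_{i_1\cdots i_k})^*\ \longmapsto\ c(I,k)\,\Sigma^{m-k}\lambda_{\overline{i_1\cdots i_k}},$$
where $\overline{I}$ is the complement of $I=\{i_1,\cdots,i_k\}$ in $\{1,\cdots,m\}$ and the signs $c(I,k)\in\{\pm 1\}$ remain to be chosen. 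Enforcing $\theta\circ\partial_{\mathrm{Hom}}=\partial_G\circ\theta$ on each generator yields, after the Koszul cancellations that already appeared in Theorem~\ref{cyprop}, a single residual global factor of the form $(-1)^{m+1}$; this equals $+1$ precisely when $m$ is odd, in which case $\theta$ is an $\mathcal{A}^e$-linear isomorphism and one concludes $R\mathrm{Hom}_{\mathcal{A}^e}(\mathcal{A},\mathcal{A}^e)\cong\Sigma^0\mathcal{A}$, i.e.\ $\mathcal{A}$ is $0$-Calabi-Yau. When $m$ is even the residual sign is $-1$ and no rescaling of the $c(I,k)$ can absorb it; the obstruction is visible as a non-trivial twist in the induced bimodule structure on $H(R\mathrm{Hom}_{\mathcal{A}^e}(\mathcal{A},\mathcal{A}^e))$, ruling out any isomorphism with $\mathcal{A}$ in $\mathrm{D}((\mathcal{A}^e)^{op})$.

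The main obstacle will be the sign bookkeeping in the Calabi-Yau step, where three sources of signs interact: the graded commutativity of $\mathcal{A}^e=\mathcal{A}\otimes \mathcal{A}^{op}$ with odd-degree generators (each transposition of two degree-$1$ elements yields a $-1$), the Koszul differential signs $(-1)^{j-1}$, and the suspension signs induced by the $\Sigma^k$ shifts. In Theorem~\ref{cyprop} these all collapse harmlessly because $|y_i|=2$, but for $|y_i|=1$ they interleave non-trivially and only their precise bookkeeping reveals the parity factor that isolates odd $m$ as the Calabi-Yau case.
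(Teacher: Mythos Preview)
Your plan is essentially correct and, for the Koszul/Gorenstein/homologically smooth claims and for the odd-$m$ Calabi-Yau direction, it coincides with the paper's proof: both lift the Koszul resolution of the diagonal via Eilenberg--Moore, observe the semi-basis is finite and concentrated in degree $0$, and build an explicit sign-adjusted isomorphism $\theta:\Hom_{\mathcal{A}^e}(F,\mathcal{A}^e)\to G$ when $m$ is odd.

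The one genuine divergence is in the even-$m$ \emph{non}-Calabi-Yau direction. The paper does not argue via a bimodule twist on cohomology; instead it runs a direct chain-level contradiction: assuming $\Hom_{\mathcal{A}^e}(F,\mathcal{A}^e)\cong\mathcal{A}$ in $\mathrm{D}((\mathcal{A}^e)^{op})$, minimality forces an actual quasi-isomorphism $g:\Hom_{\mathcal{A}^e}(F,\mathcal{A}^e)\to\mathcal{A}$ of right DG $\mathcal{A}^e$-modules, and then one checks on the degree-$0$ generators $(\Sigma^{m-1}e_{1\cdots\hat{\jmath}\cdots m})^*$ that $g\circ\partial_{\Hom}$ produces $(-1)^j 2a_0 y_j\neq 0$ while $\partial_{\mathcal{A}}\circ g$ vanishes (here $a_0=g[(\Sigma^m e_{1\cdots m})^*]\in\k^\times$). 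Your route---reading off from $\partial_{\Hom}$ that the generator of $H^0(\Hom_{\mathcal{A}^e}(F,\mathcal{A}^e))$ satisfies $[\,\cdot\,](y_j\otimes 1+1\otimes y_j)=0$ rather than the untwisted relation $[\,\cdot\,](y_j\otimes 1-1\otimes y_j)=0$, hence $H(R\Hom)\cong {}^{\sigma}H(\mathcal{A})$ with $\sigma(y_j)=-y_j$---is also valid, but be aware that your proposal leaves this step as an assertion. To make it a proof you must (i) actually compute that bimodule relation from $\partial_{\Hom}$, and (ii) argue that ${}^{\sigma}H(\mathcal{A})\not\cong H(\mathcal{A})$ as $H(\mathcal{A})$-bimodules (e.g.\ any bimodule isomorphism sends $1\mapsto c$ with $y_jc=-cy_j$, forcing $c=0$ in characteristic $0$). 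The paper's argument avoids this detour by staying at the chain level; yours is more conceptual but needs those two lines filled in. The sentence ``no rescaling of the $c(I,k)$ can absorb it'' is, by itself, not an obstruction argument---it only rules out maps of the specific shape $\theta$---so make sure the twist computation carries the weight.
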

\begin{proof}
 By the assumption $H(\mathcal{A})=\k[\lceil y_1\rceil, \cdots, \lceil y_m\rceil ]$ is a Koszul, Gorenstein and Noetherian graded algebra with $\mathrm{gl.dim}H(A)=m<\infty$. Hence $\mathcal{A}$ is Koszul, Gorenstein and homologically smooth by \cite[Proposition2.3]{HW}, \cite[Proposition 1]{Gam} and \cite[Corollary 3.7]{MW2}, respectively.

 Now, lets consider the Calabi-Yau properties of $\mathcal{A}$.
The graded left $H(\mathcal{A})^e$-module $H(\mathcal{A})$ admits a minimal free resolution:
\begin{align*}
0\leftarrow H(\mathcal{A})\stackrel{\mu}{\leftarrow}H(\mathcal{A})^e\stackrel{d_1}{\leftarrow}F_1\stackrel{d_2}{\leftarrow}F_2\stackrel{d_3}{\leftarrow}\cdots\stackrel{d_{m-1}}{\leftarrow} F_{m-1}\stackrel{d_m}{\leftarrow} F_m  \stackrel{0}{\leftarrow} 0,
\end{align*}
where $\mu: H(\mathcal{A})^e\to H(\mathcal{A})$ is defined by $\mu(\lceil a\rceil \otimes \lceil b\rceil)=\lceil ab\rceil$, each $F_k$ is a free $H(\mathcal{A})^e$-module $$\bigoplus_{e_{i_1\cdots i_k}\in E_k} H(\mathcal{A})^ee_{i_1\cdots i_k}$$ of rank $C_m^k$ with basis $$E_k=\{e_{i_1\cdots i_k}|1\le i_1<i_2<\cdots <i_k\le m\}, \quad |e_{i_1\cdots i_k}|=k,$$ and $d_k$ is defined by
\begin{align*}
d_k(e_{i_1\cdots i_k})=\sum\limits_{j=1}^k(-1)^{j-1} (\lceil y_{i_j}\rceil\otimes 1+(-1)^k1\otimes \lceil y_{i_j}\rceil)e_{i_1\cdots \hat{i_j}\cdots i_k},
\end{align*}
for any $k=1,2,\cdots, m$.
Applying the constructing procedure of Eilenberg-Moore resolution, we can construct a minimal semi-free resolution $F$ of the DG $\mathcal{A}^e$-module $\mathcal{A}$. Since $y_1,\cdots, y_m$ are central elements in $\mathcal{A}$, it is not difficult to check that
$$F^{\#}=\mathcal{A}^{e\#}\oplus [\bigoplus_{k=1}^m \bigoplus_{e_{i_1\cdots i_k}\in E_k}\mathcal{A}^{e\#}\Sigma^ke_{i_1\cdots i_k}]$$ and
$\partial_{F}$ is defined by
\begin{align*}
\partial_{F}(\Sigma^ke_{i_1\cdots i_k})=\sum\limits_{j=1}^k(-1)^{j-1} (y_{i_j}\otimes 1+(-1)^k1\otimes y_{i_j})\Sigma^{k-1}e_{i_1\cdots \hat{i_j}\cdots i_k},
\end{align*}
for any $e_{i_1\cdots i_k}\in E_k$ and $k=1,2,\cdots, m$.

The DG right $\mathcal{A}^e$-module $\Hom_{\mathcal{A}^e}(F,\mathcal{A}^e)$ is a minimal semi-free DG module whose underlying graded module is
\begin{align*}
\{\k1^*\oplus [\bigoplus_{k=1}^m \bigoplus_{e_{i_1\cdots i_k}\in E_k}\k(\Sigma^ke_{i_1\cdots i_k})^*]\}\otimes (\mathcal{A}^e)^{\#}.
\end{align*}
The differential $\partial_{\Hom}$ of $\Hom_{\mathcal{A}^e}(F,\mathcal{A}^e)$ is defined by $$\partial_{\Hom}(f)=\partial_{\mathcal{A}^e} \circ f -(-1)^{|f|}f\circ \partial_F,$$
for any graded element $f\in \Hom_{\mathcal{A}^e}(F,\mathcal{A}^e)$. So we have  $\partial_{\Hom}[(\Sigma^me_{1\cdots m})^*]=0$,

\begin{align*}
&\quad\quad\quad \partial_{\Hom}[(\Sigma^ke_{i_1\cdots i_k})^*]\\
&=\sum\limits_{e_{i_1\cdots i_jli_{j+1}\cdots i_k}\in E_{k+1}}(-1)^{j+1}(\Sigma^{k+1}e_{i_1\cdots i_jli_{j+1}\cdots i_k})^*(y_l\otimes 1-(-1)^k 1\otimes y_l),
\end{align*}
for any $k\in \{1,2,\cdots, m-1\}$, and
\begin{align*}
&\partial_{\Hom}[1^*]=\sum\limits_{i=1}^m(\Sigma e_i)^*(1\otimes y_i-y_i\otimes 1).
\end{align*}
If $m=2t$ is an even integer, we claim that $\mathcal{A}$ is not a Calabi-Yau DG algebra. It suffices to
show that $\Hom_{\mathcal{A}^e}(F,\mathcal{A}^e)\not\cong  \mathcal{A}$ in $\mathrm{D}((\mathcal{A}^e)^{op})$.
  We prove this with a proof by contradiction. If $\Hom_{\mathcal{A}^e}(F,\mathcal{A}^e)\cong  \mathcal{A}$ in $\mathrm{D}((\mathcal{A}^e)^{op})$, then there exist a DG right $\mathcal{A}^e$-module $P$ and two quasi-isomorphisms of right DG $\mathcal{A}^e$-modules $\phi: P\to  \mathcal{A}$ and  $\psi: P\to \Hom_{\mathcal{A}^e}(F,\mathcal{A}^e)$. The DG right $\mathcal{A}^e$-module $P$ admits a minimal semi-free resolution $\varepsilon:G\to P$ since $H(P)\cong H(\mathcal{A})$ is bounded below.
Then $$\psi\circ \varepsilon: G\to \Hom_{\mathcal{A}^e}(F,\mathcal{A}^e)$$ is a quasi-isomorphism of right DG $\mathcal{A}^e$-modules.  Since both $G$ and $\Hom_{\mathcal{A}^e}(F,\mathcal{A}^e)$ are minimal, $\psi\circ \varepsilon$ is an isomorphism by \cite[\S 12, Corollary 1.3]{AFH}. Then the composition $$\Hom_{\mathcal{A}^e}(F,\mathcal{A}^e)\stackrel{(\psi\circ \varepsilon)^{-1}}{\longrightarrow} G\stackrel{\varepsilon}{\longrightarrow} P \stackrel{\phi}{\longrightarrow} \mathcal{A}$$
is a quasi-isomorphism of DG right $\mathcal{A}^e$-modules. Set $g=\phi\circ \varepsilon\circ (\psi\circ \varepsilon)^{-1}$. Since $H(g)$ is an isomorphism and $\lceil(\Sigma^me_{1\cdots m})^*\rceil \neq 0$, we have $g[(\Sigma^me_{1\cdots m})^*]= a_0\neq 0$, for some $a_0\in \mathcal{A}^0=k$. For any $j\in \{1,2,\cdots, m\}$, we have $|(\Sigma^{m-1} e_{1\cdots j-1 j+1\cdots m})^*|=0$.
So there exists $a\in \mathcal{A}^0$ such that $g[(\Sigma^{m-1} e_{1\cdots j-1 j+1\cdots m})^*]= a$. We have
\begin{align*}
& \partial_{\mathcal{A}}\circ   g[(\Sigma^{m-1} e_{1\cdots (j-1) (j+1)\cdots m})^*]= \partial_{\mathcal{A}}(a)=0,\\
& \quad\quad g\circ \partial_{\Hom}[(\Sigma^{m-1} e_{1\cdots (j-1)(j+1)\cdots m})^*]\\
&= g[(-1)^j(\Sigma^me_{1\cdots m})^*(y_j\otimes 1-(-1)^{m-1}1\otimes y_j)]\\
&= (-1)^ja_0 \cdot (y_j\otimes 1+1\otimes y_j)=(-1)^j2a_0y_j
\end{align*}
Therefore, $\partial_{\mathcal{A}}\circ g \neq g\circ \partial_{\Hom}$ since $\mathrm{char}\, \k=0$ and $a_0\neq 0$ in $\mathcal{A}^0$. This implies that $g$ is not a chain map. Then we get a contradiction since $g$ is a morphism of DG right $\mathcal{A}^e$-modules.

Now, it remains to consider the case that $m=2t-1$ is an odd integer.
 The right graded $H(\mathcal{A})^e$-module $ H(\mathcal{A})$ has a minimal free resolution
\begin{align}\label{right}
0\leftarrow  H(\mathcal{A})\stackrel{\tau}{\leftarrow} H(\mathcal{A})^e\stackrel{d_1}{\leftarrow}G_1\stackrel{d_2}{\leftarrow}G_2\stackrel{d_3}{\leftarrow}\cdots\stackrel{d_{m-1}}{\leftarrow} G_{m-1}\stackrel{d_m}{\leftarrow} G_m  \stackrel{0}{\leftarrow} 0,
\end{align}
where $\tau: H(\mathcal{A})^e\to H(\mathcal{A})$ is defined by $\tau(\lceil a\rceil \otimes \lceil b\rceil)=(-1)^{|a|\cdot|b|}\lceil ba\rceil$, each $G_k$ is a free right $H(\mathcal{A})^e$-module $$\bigoplus_{\lambda_{i_1\cdots i_k}\in \Lambda_k}\lambda_{i_1\cdots i_k} H(\mathcal{A})^e$$ of rank $C_m^k$ with basis $$\Lambda_k=\{\lambda_{i_1\cdots i_k}|1\le i_1<i_2<\cdots <i_k\le m\}, \quad |\lambda_{i_1\cdots i_k}|=k,$$ and the differentials are defined by
\begin{align*}
&d_1(\lambda_{i_1})= (1\otimes \lceil y_{i_1}\rceil-\lceil y_{i_1}\rceil\otimes 1),\quad   \forall \lambda_{i_1}\in \Lambda_1                \\
&d_k(\lambda_{i_1\cdots i_k})=\sum\limits_{j=1}^k(-1)^{k-j}\lambda_{i_1\cdots \hat{i_j}\cdots i_k} (1\otimes \lceil y_{i_j}\rceil+(-1)^k\lceil y_{i_j}\rceil\otimes 1), \quad \forall \lambda_{i_1\cdots i_k} \in \Lambda_k,
\end{align*}
$k=2,\cdots, m.$ From the free resolution (\ref{right}), we can construct the Eilenberg-Moore resolution $G$ of $\mathcal{A}_{\mathcal{A}^e}$. We have
$$G^{\#}=\mathcal{A}^{e\#}\oplus [\bigoplus_{k=1}^m \bigoplus_{\lambda_{i_1\cdots i_k}\in \Lambda_k}\Sigma^k\lambda_{i_1\cdots i_k}\mathcal{A}^{e\#}]$$ and
$\partial_{G}$ is defined by
\begin{align*}
&\partial_{G}(1)=0, \quad\partial_{G}(\Sigma \lambda_{i_1})=(1\otimes y_{i_1}-y_{i_1}\otimes 1), \quad \forall \lambda_{i_1}\in \Lambda_1\\
&\partial_{G}(\Sigma^k\lambda_{i_1\cdots i_k})=\sum\limits_{j=1}^k(-1)^{k-j} \Sigma^{k-1}\lambda_{i_1\cdots \hat{i_j}\cdots i_k}(1\otimes y_{i_j}+(-1)^ky_{i_j}\otimes 1),
\end{align*}
for any $\lambda_{i_1\cdots i_k}\in \Lambda_k$ and $k=2,\cdots, m$.

Define an $(\mathcal{A}^e)^{op}$-linear map
$ \theta: \Hom_{\mathcal{A}^e}(F,\mathcal{A}^e)\to  G$
 by $$ \theta: \begin{cases}
(\Sigma^me_{1\cdots m})^* \,\, \mapsto  (-1)^{\frac{m(m+1)}{2}+m}1,   \\
(\Sigma^k e_{i_1\cdots i_k})^* \,\, \mapsto (-1)^{\sum\limits_{j=1}^ki_j+k}\Sigma^{m-k}\lambda_{\overline{i_1\cdots i_{k}}}, \quad \forall k\in \{1,\cdots, m-1\},\\
1^*  \mapsto \,\, \Sigma^{m} \lambda_{1\cdots m},
\end{cases}
$$
where $\overline{i_1\cdots i_k}$ is the $m-k$ integers arranged from small to large obtained by deleting $i_1,i_2,\cdots, i_k$ from $\{1,2,\cdots, m\}$. We claim that $\theta$ is a chain map. Indeed, we have $$\theta\circ \partial_{\Hom} [(\Sigma^me_{1\cdots m})^*] =0=\partial_{G}[(-1)^{\frac{m(m+1)}{2}+m}1]= \partial_{G}\circ \theta [(\Sigma^me_{1\cdots m})^*]$$ and $\theta\circ \partial_{\Hom}[1^* ]=\partial_{\Sigma^m G}\circ \theta[1^*]$ since  \begin{align*}
\quad\quad \theta\circ \partial_{\Hom}[1^* ]&=\theta[\sum\limits_{i=1}^m(\Sigma e_i)^*(1\otimes y_i-y_i\otimes 1)]\\
&=\sum\limits_{i=1}^m(-1)^{i+1}\Sigma^{m-1}\lambda_{1\cdots(i-1)(i+1)\cdots m}(1\otimes y_i-y_i\otimes 1)
\end{align*}
and
\begin{align*}
\partial_{G}\circ \theta[1^*]& =\partial_{G}[\Sigma^{m} \lambda_{1\cdots m}]\\
&=\sum\limits_{i=1}^{m}(-1)^{m-i}\Sigma^{m-1}\lambda_{1\cdots (i-1)(i+1)\cdots  m}(1\otimes y_i+(-1)^my_i\otimes 1)\\
&=\sum\limits_{i=1}^{m}(-1)^{2t-1-i}\Sigma^{m-1}\lambda_{1\cdots (i-1)(i+1)\cdots m}(1\otimes y_i-y_i\otimes 1)\\
&=\sum\limits_{i=1}^{m}(-1)^{1+i}\Sigma^{m-1}\lambda_{1\cdots (i-1)(i+1)\cdots m}(1\otimes y_i-y_i\otimes 1).
\end{align*}

 Furthermore, for any $k\in \{1,2,\cdots m-1\}$, we have
$$\partial_{G}\circ \theta [(\Sigma^ke_{i_1\cdots i_k})^*]=\theta\circ \partial_{\Hom} [(\Sigma^ke_{i_1\cdots i_k})^*]$$ since
\begin{align*}
&\quad\quad \theta\circ \partial_{\Hom} [(\Sigma^ke_{i_1\cdots i_k})^*]\\
&=\theta [\sum\limits_{e_{i_1\cdots i_jli_{j+1}\cdots i_k}\in E_{k+1}}(-1)^{j+1}(\Sigma^{k+1}e_{i_1\cdots i_jli_{j+1}\cdots i_k})^*(y_l\otimes 1-(-1)^k 1\otimes y_l) ]\\
&=\sum\limits_{e_{i_1\cdots i_jli_{j+1}\cdots i_k}\in E_{k+1}}(-1)^{j+\sum\limits_{s=1}^ki_s+l+k}\Sigma^{m-k-1}\lambda_{\overline{i_1\cdots i_jli_{j+1}\cdots i_k}}(y_l\otimes 1-(-1)^k 1\otimes y_l)
\end{align*}
and
\begin{align*}
&\quad\quad \partial_{G}\circ \theta [(\Sigma^ke_{i_1\cdots i_k})^*]\\
&=\partial_{G}[(-1)^{\sum\limits_{s=1}^k i_s +k}\Sigma^{m-k}\lambda_{\overline{i_1\cdots i_{k}}}]\\
&=\sum\limits_{e_{i_1\cdots i_jli_{j+1}\cdots i_k}\in E_{k+1}}(-1)^{\sum\limits_{s=1}^k i_s +m-l+j }\Sigma^{m-k-1}\lambda_{\overline{i_1\cdots i_jli_{j+1}\cdots i_k}}(1\otimes y_l-(-1)^{-k}y_l\otimes 1)\\
&=\sum\limits_{e_{i_1\cdots i_jli_{j+1}\cdots i_k}\in E_{k+1}}(-1)^{\sum\limits_{s=1}^k i_s-l+j-k }\Sigma^{m-k-1}\lambda_{\overline{i_1\cdots i_jli_{j+1}\cdots i_k}}(y_l\otimes 1-(-1)^k1\otimes y_l).
\end{align*}
So $\theta$ is an isomorphism of DG $\mathcal{A}^{op}$-modules and
$\Hom_{\mathcal{A}^e}(F,\mathcal{A}^e)\cong \mathcal{A}$ in $\mathrm{D}((\mathcal{A}^e)^{op})$.
Therefore, $\mathcal{A}$ is a $0$-Calabi-Yau DG algebra.
\end{proof}

\begin{cor}\label{zerodiff}
The trivial DG polynomial algebra $\mathcal{A}(0,0,\cdots, 0)$ is a  Koszul, homologically smooth and Gorenstein DG algebra.  Moreover, it is $0$-Calabi-Yau if and only if $n$ is an odd integer.
\end{cor}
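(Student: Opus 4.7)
The plan is to reduce this to a direct application of Theorem \ref{degreeone} with $m = n$. For the trivial DG polynomial algebra $\mathcal{A} = \mathcal{A}(0,0,\cdots,0)$, the differential is identically zero, so every element is a cocycle and $H(\mathcal{A}) = \mathcal{A}^{\#} = \k[x_1,x_2,\cdots,x_n]$. Since the underlying graded algebra is commutative, each generator $x_i$ is central in $\mathcal{A}$; it is trivially a cocycle because $\partial_{\mathcal{A}} = 0$; and by construction $|x_i| = 1$.

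First I would observe that the identifications $\lceil x_i\rceil = x_i$ in $H(\mathcal{A})$ give
\[
H(\mathcal{A}) = \k[\lceil x_1\rceil, \lceil x_2\rceil, \cdots, \lceil x_n\rceil],
\]
which is precisely the shape of the cohomology algebra required by Theorem \ref{degreeone}, with the $n$ generators $x_1,\cdots,x_n$ playing the role of the central, cocycle, degree $1$ elements $y_1,\cdots,y_m$. Applying Theorem \ref{degreeone} with $m = n$ then yields immediately that $\mathcal{A}(0,0,\cdots,0)$ is Koszul, homologically smooth, and Gorenstein, and that it is $0$-Calabi-Yau if and only if $n$ is odd.

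There is essentially no obstacle here: the corollary is a clean specialization of the general theorem, and the only thing to check is that the hypotheses are satisfied, which is immediate from $\partial_{\mathcal{A}} = 0$ and the commutativity of $\mathcal{A}^{\#}$. One might optionally remark that the homological smoothness and Gorenstein conclusions also follow from Theorem \ref{asreg}, which already covers both cases $(t_1,\cdots,t_n) = (0,\cdots,0)$ and $(t_1,\cdots,t_n) \neq (0,\cdots,0)$, so the new content of the corollary is really the Koszul property and the sharp Calabi-Yau dichotomy in terms of the parity of $n$.
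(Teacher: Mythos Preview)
Your proposal is correct and matches the paper's approach exactly: the corollary is stated without proof in the paper, being an immediate specialization of Theorem~\ref{degreeone} with $y_i = x_i$ and $m = n$, which is precisely what you do. Your verification that the $x_i$ are central, cocycle, and degree~$1$ in $\mathcal{A}(0,\ldots,0)$ is the only thing to check, and your optional remark about Theorem~\ref{asreg} is also apt.
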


\begin{rem}{\rm
In Theorem \ref{cyprop} and Theorem \ref{degreeone}, we need that the cocycle elements $y_1,y_2,\cdots, y_m$ are central in $\mathcal{A}$. This simplifies the construction of Eilenberg-Moore resolution. Without this subtle condition, we are unable to get the corresponding result.
 Note that Theorem \ref{degreeone} is not a generalization of \cite[Proposition 2.4, Theorem 2.7]{MH} for this reason.}
\end{rem}

\section*{Acknowledgments}
 The first author is
supported by NSFC  (Grant No.11001056),
the
China Postdoctoral
Science Foundation  (Grant Nos.
20090450066 and 201003244),  the
Key Disciplines of Shanghai Municipality (Grant No.S30104) and the Innovation Program of
Shanghai Municipal Education Commission (Grant No.12YZ031).
\def\refname{References}


\begin{thebibliography}{AF1}
\bibitem[AFH]{AFH}L. L. Avramov, H.-B. Foxby and S. Halperin, Differential graded homologcial algebra, In preparation, 2004. Version from 02.07.2004.

\bibitem[AT]{AT}W. Andrezejewski and A. Tralle, Cohomology of some graded differential algebras, Fund. Math. 145 (1994), 181--203.

\bibitem[DGI]{DGI}W. G. Dwyer, J. P. C. Greenlees and S. B. Iyengar, DG algebras with exterior homology, Bull. London Math. Soc., 45 (2013), 1235--1245.

\bibitem[FHT1]{FHT1}Y. F$\acute{e}$lix, S. Halperin, and J. C. Thomas,
 Gorenstein spaces, Adv. Math. 71 (1988), 92--112.

\bibitem[FHT2]{FHT2}Y. F$\acute{e}$lix, S. Halperin and J. C. Thomas,
 ``Rational Homotopy Theory", Grad. Texts in Math. 205,
Springer, Berlin, 2000.

\bibitem[FIJ]{FIJ} A. Frankild and P. J$\o$rgensen, Dualizing Differential Graded modules and Gorenstein Differential Graded Algebras, J. London Math. Soc. (2) 68 (2003), 288--306.


\bibitem[FJ1]{FJ1} A. Frankild and P. J$\o$rgensen,  Gorenstein Differential Graded Algebras,  Israel J. Math. 135 (2003), 327-353.

\bibitem[FJ2]{FJ2} A. Frankild and P. J$\o$rgensen Homological properties of cochain differential graded algebras, J. Algebra, 320 (2008),3311--3326.

\bibitem[FM]{FM}Y. F$\acute{e}$lix and A. Murillo, Gorenstein graded
algebras and the evaluation map, Canad. Math. Bull. Vol. 41 (1998),
28--32.

\bibitem[Gam]{Gam}H. Gammelin, Gorenstein space with nonzero
evaluation map, Trans. Amer. Math Soc. 351 (1999), 3433--3440.

\bibitem[Gin]{Gin}V. Ginzberg, Calabi-Yau algebra, arxiv: math. AG/0612.139 v3.

\bibitem[HM]{HM}J.-W. He and X.-F. Mao, Connected cochain DG algebras of Calabi-Yau dimension 0,  Proc. Amer. Math. Soc. 145 (2017), 937--953.

\bibitem[HW]{HW}J.-W. He and Q.-S. Wu,  Koszul differential graded
algebras and BGG correspondence, J. Algebra 320 (2008), 2934--2962.


\bibitem[Jor]{Jor} P. J$\o$rgensen, Auslander-Reiten theory over
topological spaces. Comment. Math. Helv., 79 (2004), 160-182.







\bibitem[Kal]{Kal}D. Kaledin, Some remarks on formality in families, Moscow Math. J., 7 (2007), 643--652.



\bibitem[Lunt]{Lunt}V. A. Lunts,  Formality of DG algebras (after Kaledin), J. Algebra, 323 (2010), 878--898.


\bibitem[Mao]{Mao}X.-F. Mao, DG algebra structures on AS-regular algebras of dimension 2, Sci. China Math., 54, (2011) 2235--2248.
\bibitem[MH]{MH}X.-F. Mao and J.-W. He, A special class of Koszul Calabi-Yau DG algebras, Acta Math. Sinica, Chinese series, 60 (2017), 475--504

\bibitem[MHLX]{MHLX}X.-F. Mao, J.-W. He, M. Liu and J.-F. Xie, Calabi-Yau properties of non-trivial  Noetherian DG down-up algebras, J. Algebra Appl. https://doi.org/10.1142/S0219498818500901

\bibitem[MW1]{MW1}X.-F. Mao and Q.-S. Wu, Homological invariants for connected DG algebra,
Comm. Algebra,  36 (2008), 3050--3072.

\bibitem[MW2]{MW2}X.-F. Mao and Q.-S. Wu, Compact DG modules and
Gorenstein DG algebra, Sci. China Ser. A, 52 (2009), 711--740.

\bibitem[MW3]{MW3}X.-F. Mao and Q.-S. Wu, Cone length for DG
modules and global dimension of DG algebras, Comm. Algebra, 39
(2011), 1536-1562.


\bibitem[Sch]{Sch} K. Schmidt, Families of  Auslander-Reiten theory for simply
connected differential graded algebras, Math. Z. 264 (2010), 43--62.

\bibitem[VdB]{VdB}M. M. Van den Bergh,  Calabi-Yau algebras and
superpotentials, Sel. Math. New Ser. 21 (2015), 555--603.




\end{thebibliography}
\end{document}